\theoremstyle{plain}
\newtheorem{intro}{}
\newtheorem{theorem}{Theorem}[section]
\newtheorem{prop}[theorem]{Proposition}
\newtheorem{lemma}[theorem]{Lemma}
\newtheorem{corol}[theorem]{Corollary}
\newtheorem{conj}{Conjecture}
\theoremstyle{definition}
\newtheorem{defin}[theorem]{Definition}
\newtheorem{exam}[theorem]{Example}
\def\iff{if and only if }
\def\hot{\mathop\mathrm{Hot}\nolimits}
\def\Es{\mathop\mathrm{Es}\nolimits}
\def\End{\mathop\mathrm{End}\nolimits}
\def\Mat{\mathop\mathrm{Mat}\nolimits}
\def\add{\mathop\mathrm{add}\nolimits}
\def\hos{\mathop\mathrm{Hos}\nolimits}
\def\hhos{\mathop{\widehat{\mathrm{Hos}}}\nolimits}
\def\nil{\mathop\mathrm{nil}\nolimits}
\def\gl{\mathop\mathrm{GL}\nolimits}
\def\sl{\mathop\mathrm{SL}\nolimits}
\def\trs{\mathop\mathrm{tors}\nolimits}
\def\mtr#1{\begin{pmatrix}#1\end{pmatrix}}
\def\dlim{\varinjlim}
\def\lst#1#2{ #1_1 , #1_2 , \dots , #1_{#2} }
\def\set#1{\left\{\,#1\,\right\}}
\def\setsuch#1#2{\left\{\,#1\mid #2\,\right\}}
\def\lb{\textup{(}}	\def\rb{\textup{)}}
\def\wee{\wedge}	\def\8{\infty}	\def\+{\oplus}
\def\*{\otimes}	\def\xx{\times}	\def\bop{\bigoplus}
\def\sb{\subset}	\def\sbe{\subseteq}
\def\sp{\supset}	\def\spe{\supseteq}
\def\xarr{\xrightarrow}	\def\bup{\bigcup}
\def\nF{\mathrm f}	\def\nT{\mathrm t}
\def\mZ{\mathbb Z}	\def\mQ{\mathbb Q}	\def\sA{\mathsf A}
\def\cA{\mathscr A}	\def\cS{\mathscr S}	\def\La{\Lambda}
\def\al{\alpha}		\def\be{\beta}		\def\io{\iota}
\def\Ga{\Gamma}	
	\def\ga{\gamma}	\def\vi{\varphi}
\def\CW{\mathsf{CW}}
\def\hS{\hat\cS}		\def\bve{\bigvee}
\def\Ab{\mathsf{Ab}}	\def\opp{^\mathrm{op}}
\def\hLa{\bar\La}	\def\hM{\bar M}	\def\hN{\bar N}
	\def\tX{\tilde X}	\def\hL{\bar L}
\def\hZ{\hat\mZ}	\def\hX{\hat X}	\def\hY{\hat Y}	
\def\hZ{\hat Z}		
\def\tvi{\tilde\phi}
\begin{document}

\title{On genera of polyhedra}
\author{Yuriy Drozd}
\author{Petro Kolesnyk}
\address{Institute of Mathematics, National Academy of Sciences, 01034 Kyiv, Ukraine}
\email{drozd@imath.kiev.ua}
\urladdr{www.imath.kiev.ua/$\sim$drozd}
\email{iskorosk@googlemail.com}
\subjclass[2010]{55P42, 55P60, 55P10}
\keywords{stable homotopy category, polyhedron, genus, cancellation, orders in semisimple 
algebras}

\begin{abstract}
  We consider the stable homotopy category $\cS$ of \emph{polyhedra} (finite 
cell complexes). We say that two polyhedra $X,Y$ are \emph{in the same genus} and write 
$X\sim Y$ if $X_p\simeq Y_p$ for all prime $p$, where $X_p$ denotes the image of 
$X$ in the localized category $\cS_p$. We prove that it is equivalent to the stable 
isomorphism $X\vee B_0\simeq Y\vee B_0$, where $B_0$ is the wedge of 
all spheres $S^n$ such that $\pi^S_n(X)$ is infinite. We also prove that a stable 
isomorphism $X\vee X\simeq Y\vee X$ implies a stable isomorphism $X\simeq Y$.

\end{abstract}

\maketitle


 Genera, i.e. classes of modules which have isomorphic localizations, are widely studied 
and applied in the theory of integral representations, see \cite{cr1,cr2}. On the other 
hand, the notion of genera is natural in other categories, for instance, in the 
\emph{stable homotopy category} \cite{co}, which plays an important role in algebraic 
topology. This paper is an attempt to study genera in the stable homotopy category. If we 
consider finite cell complexes (``polyhedra''), such a study can be reduced to integral 
representations of orders in semisimple algebras. Therefore we can apply a 
deep 
theory developed for genera of integral representations. In particular, we obtain the 
following results:

\begin{intro}
 Two polyhedra $X,Y$ are in the same genus \iff $X\vee B_0$ is stably isomorphic to 
$Y\vee B_0$, where $B_0$ is the wedge of all spheres $S^n$ such that the stable homotopy 
group $\pi^S_n(X)$ is infinite \lb or, the same, not torsion\rb.
\end{intro}

\begin{intro}
 If $X\vee X$ and $Y\vee X$ are stably isomorphic, then $X$ and $Y$ are stably isomorphic 
too.
\end{intro}

 It seems very plausible that other results on genera of integral 
representations also have analogues in the stable homotopy category. In particular, we 
conjecture that the number of stable isomorphism classes in a genus is bounded when we 
consider polyhedra of a prescribed dimension.
We also give some examples of calculating genera for polyhedra of small dimensions.

\section{Stable category and genera}
\label{s1}

 First recall the definitions concerning \emph{stable homotopy category}. We consider the category of connected topological spaces $X$ with fixed points $*_X$. The \emph{homotopy category} $\hot$ has such spaces 
as objects, while $\hot(X,Y)$ is the set of homotopy classes of continuous maps (preserving fixed points). We denote by $X\vee Y$ the \emph{wedge} (one-point union) of the spaces $X$ and $Y$, i.e. the subspace $(X\xx *_Y)\cup(*_X\xx Y)\sb 
X\xx Y$, and by $X\wee Y$ their \emph{smash product}, i.e. $X\xx Y/X\vee Y$. The \emph{suspension of} $X$ is, by definition, the smash product $SX=S^1\wee Y$. We denote by $S^nX$ the $n$-th iterated suspension (isomorphic to $S^n\wee X$). The suspension induces a functor $S:\hos\to\hos$. On the subcategory of simply connected spaces this functor is \emph{conservative}, i.e. $Sf$ is an isomorphism \iff so is $f$. Note that the suspensions are \emph{cogroups} in the homotopy category and their iterations $S^nX$ are commutative cogroups. Therefore, all 
spaces $\hos(S^nX,Y)$ have a natural group structure, commutative if $n>1$. The 
suspension induces group homomorphisms $\hos(S^nX,S^nY)\to\hos(S^{n+1}X,S^{n+1}Y)$ (see \cite{sw} for details). Now we define the \emph{group of stable maps} $\hos(X,Y)$ as the direct limit $\dlim_n\hot(S^nX,S^nY)$. It is a commutative group. If $\al\in\hot(S^nX,S^nY)$, $\be\in\hot(S^mY,S^mZ)$, the product $S^n\be\circ S^m\al$ is 
defined and belongs to $\hot(S^{m+n}X,S^{m+n}Z)$. Its class in $\hos(X,Z)$ is, by definition, the product of the classes of $\al$ and $\be$. Thus we obtain the \emph{stable homotopy category} $\hos$. It is an additive category, where the wedge plays role of the direct sum. Moreover, it is \emph{fully additive}, i.e. any idempotent 
$e\in\hos(X,X)$ splits as $e=\io\pi$ for some morphisms $\pi:X\to Y$ and $\io:Y\to 
X$ such that $\pi\io=1_Y$ (see \cite[page 86]{co}). If $1-e=\io'\pi'$, where $\pi':X\to Y'$, $\io':Y'\to X$ and $\pi'\io'=1_{Y'}$, the morphisms $\io,\pi,\io',\pi'$ define a decomposition $X\simeq 
Y\vee Y'$. We denote by $\Es(X)$ the endomorphism ring $\hos(X,X)$ of $X$ in the stable 
homotopy category, and by $kX$ the wedge of $k$ copies of $X$.

 Consider the full subcategory $\CW\sb\hot$ consisting of \emph{polyhedra}, i.e. finite 
cell complexes. For such polyhedra the direct limit in the definition of $\hos$ actually stabilizes at a finite level. It follows from the Generalized Freudenthal Theorem \cite[Theorem 1.21]{co}.

\begin{theorem}\label{t11}
   If $\,\dim X\le m$ and $\,Y$ is $(n-1)$-connected \lb i.e. $\pi_k(Y)=0$ for $k<n$\rb\ 
then the suspension map $\hot(X,Y)\to \hot(SX,SY)$ is bijective if $m<2n-1$ and surjective if $m=2n-1$.

 In particular, the map $\hot(S^kX,S^kY)\to\hos(X,Y)$ is bijective for $k>m-2n+1$ and surjective for $k=m-2n+1$.
  \end{theorem}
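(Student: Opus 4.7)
My plan is to recognize the suspension map as a post-composition map, reducing the theorem to the classical Freudenthal theorem for spheres together with a standard Whitehead-type obstruction argument. The $(S,\Omega)$-adjunction gives $\hot(SX,SY)\cong\hot(X,\Omega SY)$, under which the suspension map $\hot(X,Y)\to\hot(SX,SY)$ corresponds to post-composition with the unit $\eta_Y\colon Y\to\Omega SY$:
\[
(\eta_Y)_*\colon\hot(X,Y)\to\hot(X,\Omega SY).
\]
Under the standard identification $\pi_k(\Omega SY)=\pi_{k+1}(SY)$, the induced map $\pi_k(\eta_Y)$ is identified with the suspension homomorphism $\pi_k(Y)\to\pi_{k+1}(SY)$. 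The classical Freudenthal theorem asserts that this is bijective for $k<2n-1$ and surjective for $k=2n-1$ whenever $Y$ is $(n-1)$-connected, which is precisely to say that $\eta_Y$ is a $(2n-1)$-equivalence.

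The first assertion of the theorem now becomes a direct application of the standard Whitehead-style obstruction result: if $\vi\colon A\to B$ is an $N$-equivalence and $Z$ is a CW complex with $\dim Z\le N$, then $\vi_*\colon\hot(Z,A)\to\hot(Z,B)$ is bijective for $\dim Z<N$ and surjective for $\dim Z=N$. This is itself proved by cellular induction on $Z$, with the extension and lifting obstructions at a $k$-cell living in the homotopy groups of the homotopy fibre of $\vi$ in degrees $k-1$ and $k$ respectively; all such obstructions vanish when $k\le N-1$ (giving bijectivity), while at the top degree $k=N$ only the extension obstruction vanishes (giving only surjectivity). Specializing to $A=Y$, $B=\Omega SY$, $\vi=\eta_Y$, $N=2n-1$, $Z=X$ yields the first part of the theorem.

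The main technical point is the borderline behaviour at $\dim X=2n-1$, where surjectivity but not injectivity survives, matching the theorem's statement exactly. The second assertion, about stable maps, then follows by iteration: since $\dim S^kX\le m+k$ and $S^kY$ is $(n+k-1)$-connected, applying the first assertion to the $k$-th suspension shows that $\hot(S^kX,S^kY)\to\hot(S^{k+1}X,S^{k+1}Y)$ is bijective once $m+k<2(n+k)-1$, i.e. $k>m-2n+1$. Hence the direct limit defining $\hos(X,Y)$ is surjectively reached at $k=m-2n+1$ and bijectively at $k=m-2n+2$.
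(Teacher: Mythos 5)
The paper does not actually prove this theorem; it is quoted verbatim from Cohen's \emph{Stable Homotopy} lecture notes (cited as Theorem 1.21 there), so there is no in-paper argument to compare against. Your proof is correct and follows the standard route one would expect such a proof to take: translate the suspension map into $(\eta_Y)_*$ via the $(S,\Omega)$-adjunction and the naturality square $\eta_Y\circ f=\Omega Sf\circ\eta_X$, observe that Freudenthal makes $\eta_Y\colon Y\to\Omega SY$ a $(2n-1)$-equivalence, and apply the obstruction-theoretic lemma that an $N$-equivalence induces a bijection on $[Z,-]$ for $\dim Z<N$ and a surjection for $\dim Z=N$; the stabilization count $k>m-2n+1$ then drops out of $m+k<2(n+k)-1$. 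Two small points worth flagging: the input you label ``the classical Freudenthal theorem'' is really the \emph{generalized} Freudenthal theorem for an arbitrary $(n-1)$-connected space $Y$, not just for spheres, and this (together with the cell-by-cell obstruction step on $X$) implicitly uses that $X$ and $Y$ are CW complexes --- automatic in this paper's setting since both are polyhedra, but it should be said. With those caveats made explicit, the argument is complete and matches the statement, including the borderline surjectivity cases, exactly.
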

  
  \begin{corol}\label{t12}
 If $X$ is a polyhedron of dimension at most $n$, the map $\hot(S^kX,S^kY)\to\hos(X,Y)$ 
is bijective for $k> n+1$ and surjective for $k=n+1$. In particular, $\pi^S_n(X)\simeq\pi_{2(n+1)}(S^{n+2}X)$.
  \end{corol}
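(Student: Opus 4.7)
The plan is to apply Theorem~\ref{t11} iteratively along the telescope
\[
\hot(S^k X, S^k Y) \to \hot(S^{k+1} X, S^{k+1} Y) \to \cdots
\]
whose colimit is $\hos(X, Y)$. At stage $j \ge 1$, the source $S^j X$ has dimension at most $n + j$, and since reduced suspension of any pointed space is path-connected and suspension raises connectivity by one, the target $S^j Y$ is at least $(j-1)$-connected. Feeding these parameters into Theorem~\ref{t11} (with its $m = n + j$ and its connectivity index equal to $j$), the stage-$j$ suspension map is bijective when $n + j < 2j - 1$, i.e.\ $j > n + 1$, and surjective when $n + j = 2j - 1$, i.e.\ $j = n + 1$.

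A standard colimit argument then delivers both assertions. If $k > n + 1$, every map in the tail of the telescope starting at level $k$ is a bijection, so the canonical map $\hot(S^k X, S^k Y) \to \hos(X, Y)$ is itself a bijection. If $k = n + 1$, the first map of that tail is surjective and each subsequent map a bijection, so the composition into $\hos(X, Y)$ is surjective.

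For the ``in particular'' clause, I would apply the statement with $S^n$ (of dimension exactly $n$) in place of $X$ and the original polyhedron $X$ in place of $Y$. Since $\pi^S_n(X) = \hos(S^n, X)$ by definition and $\hot(S^{k+n}, S^k X) = \pi_{k+n}(S^k X)$, the choice $k = n + 2 > n + 1$ yields the desired bijection $\pi^S_n(X) \simeq \pi_{2(n+1)}(S^{n+2} X)$. There is no real obstacle in this argument: the content is entirely in Theorem~\ref{t11}, and the only delicate point is not to overclaim the connectivity of $S^j Y$ — without any hypothesis on $Y$ beyond being pointed, the general bound is only $(j-1)$-connectedness, which is precisely what produces the thresholds $k = n + 1$ and $k > n + 1$ appearing in the statement.
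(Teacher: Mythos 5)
Your proposal is correct and matches how the paper intends the corollary to follow from Theorem~\ref{t11} (which is stated without a separate proof): you reapply the Freudenthal bound at each stage of the suspension telescope, using only the weak estimate that $S^jY$ is $(j-1)$-connected (true for any pointed $Y$), which yields exactly the thresholds $k>n+1$ and $k=n+1$ in the statement; and the ``in particular'' clause is the special case $X=S^n$, $k=n+2$. You also correctly note that the paper's bound is not sharp when $Y$ is assumed connected, but is chosen to be valid without that hypothesis.
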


 In what follows, when speaking on polyhedra, we always consider them as objects of 
the stable homotopy category $\cS$. In particular, an \emph{isomorphism} always means a 
\emph{stable isomorphism}. An important feature of the stable homotopy category $\cS$ is 
that all its $\hos$-groups are finitely generated \cite[Corollary X.8.3]{hu}. 

 Let $\CW_n^m$ be the full subcategory of $\CW$ consisting of $(n-1)$-connected polyhedra 
of dimension at most $n+m$. The suspension functor maps $\CW_n^m$ to $\CW_{n+1}^m$. If $n>m+1$ it is an equivalence of categories. If $n=m+1$, it is an \emph{epivalence}, i.e. this functor is full, dense and conservative. In 
particular, it is one-to-one on the isomorphism classes of objects. Set $\CW^m=\bup_{n=1}^\8\CW^m_n$. We denote by $\cS$ the image in $\hos$ of the category $\CW$ and by $\cS^m$ the image of $\CW^m$ in $\cS$.

 Let $\mZ_p=\setsuch{\frac ab}{a,b\in\mZ,\ p\nmid b}$, where $p$  is a prime integer, 
$\hZ_p$ be the ring of $p$-adic integers. We denote by $\cS_p$ ($\hS_p$) the category 
which has the same objects as $\cS$ but the sets of morphisms are 
$\hos_p(X,Y)=\hos(X,Y)\*\mZ_p$ (respectively, $\hhos_p(X,Y)=\hos(X,Y)\*\hZ_p$). 
Actually, $\hos_p(X,Y)$ ($\hhos_p(X,Y)$) coincides with the group of stable maps between 
the $p$-localizations (respectively, $p$-adic completions of $X$ and $Y$) in the 
sense of Artin--Mazur--Sullivan \cite{su}. For the sake of convenience, we 
denote the image in $\cS_p$ ($\hS_p$) of a polyhedron $X$ by $X_p$ (respectively, by 
$\hX_p$). Since all groups $\hos(X,Y)$ are finitely generated for $X,Y\in\cS$,
$\hhos_p(X,Y)$ coincides with the $p$-adic completion of $\hos_p(X,Y)$.  Therefore,
$X_p\simeq Y_p$ in $\cS_p$ \iff $\hX_p\simeq \hY_p$ in $\hS_p$. 

\begin{defin}\label{t13}
  We say that two polyhedra $X$ and $Y$ are \emph{of the same genus} and write $X\sim Y$ if $X_p\simeq Y_p$ for every prime $p$. We denote by $G(X)$ the \emph{genus of the polyhedron} $X$, i.e. the full subcategory of $\cS$ consisting of all polyhedra which are in the same genus as $X$, and by $g(X)$ the number of isomorphism classes in $G(X)$. (Further we will see that it is always finite.)
\end{defin}

 Since all endomorphism rings $\hhos_p(X,X)$ are finitely generated modules over a 
complete local Noetherian ring $\mZ_p$, the \emph{cancellation law} holds in
$\hS_p$ and therefore in $\cS_p$: if $\hX_p\vee\hZ_p\simeq\hY_p\vee\hZ_p$  
(or $X_p\vee Z_p\simeq Y_p\vee Z_p$), then $hX_p\simeq \hY_p$ (respectively, $X_p\simeq 
Y_p$). So we have:

\begin{prop}\label{t14}
  If $X\vee Z\simeq Y\vee Z$, then $X\sim Y$.
\end{prop}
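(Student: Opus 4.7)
The plan is essentially to reduce the global statement to the $p$-local one, where cancellation has already been established in the paragraph just preceding the proposition.

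First, I would fix a prime $p$ and observe that the assignment $X\mapsto X_p$ is a functor $\cS\to\cS_p$ (on morphisms it is the tensor product with $\mZ_p$, which clearly preserves wedges since $\hos(-,-)\*\mZ_p$ sends $A\+B$ to $(A\*\mZ_p)\+(B\*\mZ_p)$). Consequently, a stable isomorphism $X\vee Z\simeq Y\vee Z$ in $\cS$ yields an isomorphism $X_p\vee Z_p\simeq Y_p\vee Z_p$ in $\cS_p$.

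Next, I would invoke the cancellation law stated in the paragraph preceding the proposition: because $\hhos_p(W,W)$ is a finitely generated module over the complete local Noetherian ring $\hZ_p$ for every polyhedron $W$, the category $\hS_p$ satisfies cancellation, and hence so does $\cS_p$ (using the equivalence $X_p\simeq Y_p$ \iff $\hX_p\simeq\hY_p$ noted in the excerpt). Applying cancellation to $X_p\vee Z_p\simeq Y_p\vee Z_p$ gives $X_p\simeq Y_p$ in $\cS_p$.

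Since $p$ was arbitrary, we obtain $X_p\simeq Y_p$ for every prime $p$, which by Definition \ref{t13} means $X\sim Y$.

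There is no serious obstacle: the proposition is an immediate corollary of the $p$-local cancellation result, and the only thing to verify carefully is that localization is a wedge-preserving functor, which is essentially definitional since $\hos_p(X,Y)=\hos(X,Y)\*\mZ_p$ and tensor product commutes with finite direct sums.
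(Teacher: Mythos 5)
Your proposal matches the paper's reasoning: the proposition is stated as an immediate consequence (``So we have:'') of the preceding paragraph, which establishes cancellation in $\cS_p$ via finitely generated modules over the complete local Noetherian ring $\hZ_p$. Your only addition is the explicit remark that localization preserves wedges, which the paper treats as obvious; the argument is otherwise identical.
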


 Later we will show that the converse is also true. It means that the relation $X\sim Y$ is the same as the relation $X\equiv Z$ from \cite[page 90]{co}.

We also consider the category $\cS_\mQ$, having the same objects as $\cS$, but with the 
morphism groups $\hos_\mQ(X,Y)=\hos(X,Y)\*\mQ$. This category is \emph{semisimple}:   
 \[ \hos_\mQ(S^n,S^m)=\begin{cases}
 \mQ &\text{if } m=n,\\
 0 &\text{if } m\ne n
\end{cases} 
\]
 and every object in $\cS_\mQ$ is isomorphic to a direct sum of spheres. Namely,

\begin{prop}\label{t15}
 For any object $X\in\cS$, set $r_n(X)=\dim_\mQ\hos_\mQ(S^n,X)$ and 
$B(X)=\bve_nr_n(X)S^n$. Then $X$ is isomorphic to $B(X)$ in the category $\cS_\mQ$.
\end{prop}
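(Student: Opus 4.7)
My plan is to proceed by induction on the number of cells of the polyhedron $X$. First, though, I would verify that $B(X)$ is a well-defined polyhedron: each $r_n(X)$ is finite because $\hos(S^n,X)=\pi^S_n(X)$ is finitely generated, and only finitely many $r_n(X)$ are non-zero because the rational stable homotopy of a polyhedron is concentrated in a finite range of degrees (for instance via the rational Hurewicz identification of $\pi^S_n(X)\*\mQ$ with $H_n(X;\mQ)$).

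The base cases $X\simeq *$ and $X\simeq S^n$ are trivial. For the inductive step, I would pick a top-dimensional cell to write $X$ as the cofibre of an attaching map $g:S^{k-1}\to Y$ in $\cS$, where $Y$ has strictly fewer cells. By the inductive hypothesis, $Y\cong B(Y)=\bve_n r_n(Y)S^n$ in $\cS_\mQ$. The image of $g$ in $\cS_\mQ$ is thus a morphism $S^{k-1}\to\bve_n r_n(Y)S^n$; since the formula preceding the proposition gives $\hos_\mQ(S^{k-1},S^n)=0$ for $n\ne k-1$, this morphism factors through the $S^{k-1}$-summand and is encoded by a vector $v\in\mQ^{r_{k-1}(Y)}$. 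If $v=0$, the cofibre in $\cS_\mQ$ is $B(Y)\vee S^k$. If $v\ne 0$, a $\gl_{r_{k-1}(Y)}(\mQ)$-change of basis, namely an automorphism of $B(Y)$ in $\cS_\mQ$, turns $g$ into the inclusion of a single $S^{k-1}$-summand, whose cofibre is the complementary wedge, i.e.\ $B(Y)$ with one copy of $S^{k-1}$ deleted. In either case $X$ is isomorphic in $\cS_\mQ$ to some wedge of spheres, say $\bve_n m_n S^n$.

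To conclude, I would read off the multiplicities from
\[
r_m(X)=\dim_\mQ\hos_\mQ\bigl(S^m,\bve_n m_nS^n\bigr)=m_m,
\]
so the wedge decomposition is precisely $B(X)$.

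The key point is the dichotomy $v=0$ versus $v\ne 0$ in the inductive step: it rests entirely on the vanishing $\hos_\mQ(S^{k-1},S^n)=0$ for $n\ne k-1$, which is exactly the input just recorded. The rest is elementary bookkeeping with the additive structure of $\cS_\mQ$, together with the universal property of the wedge as coproduct.
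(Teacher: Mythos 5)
Your proposal takes a genuinely different route from the paper's. You proceed by induction on cells, push a single attaching map $g:S^{k-1}\to Y$ into $\cS_\mQ$, and normalize it by a change of basis of $B(Y)$. The paper instead builds one map $f:B(X)\to X$ in $\cS$ by choosing lifts of a $\mQ$-basis, and shows $f$ is an $\cS_\mQ$-isomorphism via Lemma~\ref{t16}: $f_*:\hos_\mQ(\_\,,B(X))\to\hos_\mQ(\_\,,X)$ is a morphism of exact functors $\cS\to\Ab$ that is iso on spheres, hence iso everywhere, hence $f$ is iso by Yoneda. Your approach is more geometric, but there is a real gap in it.

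The gap is that you manipulate cofibres inside $\cS_\mQ$ as if $\cS_\mQ$ were a triangulated category with $\cS\to\cS_\mQ$ exact. Concretely: the isomorphism $\iota:Y\to B(Y)$ and the change-of-basis automorphism $\tau$ live only in $\cS_\mQ$, not in $\cS$, and yet you conclude that the cofibre of $g$ (i.e.\ $X$) is $\cS_\mQ$-isomorphic to ``the cofibre of $\tau\iota g$.'' For this you need to know (a) that a map in $\cS_\mQ$ has a well-defined cofibre up to $\cS_\mQ$-iso, and (b) that cofibres are invariant under $\cS_\mQ$-isomorphism of the attaching map. Neither is immediate from the paper's definition of $\cS_\mQ$, which simply tensors the hom-groups with $\mQ$. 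Clearing denominators gives an actual map $N\tau\iota g$ in $\cS$, but its cofibre is a priori a different polyhedron, and showing it agrees with $X$ in $\cS_\mQ$ is exactly the kind of comparison the paper's Lemma~\ref{t16} is designed to certify --- so relying on it informally is circular. This can be repaired by first establishing that $\cS_\mQ$ carries a triangulated structure making $\cS\to\cS_\mQ$ exact (a standard fact, but not proved here), or by running a functorial 5-lemma argument analogous to Lemma~\ref{t16}. As written, though, the proposal skips precisely the step the paper isolates and proves.
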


Note that $\hos_\mQ(S^n,X)$ is a finite dimensional vector space over $\mQ$, zero if 
$n>\dim X$, since the stable homotopy groups $\pi^S_n(S^m)$ are periodic for $n>m$ 
\cite{sw}. For any abelian group $A$ we denote by $\trs(A)$ its \emph{torsion part}, i.e. 
the subgroup of all torsion elements. Especially, for objects $X,Y\in\cS$ we denote $\trs(X,Y)=\trs\hos(X,Y)$.
 
\begin{proof}
  We need a definition and an easy lemma. Recall that a \emph{cone sequence} of a map $f:X\to Y$ is the sequence $X\xarr f Y\xarr g C_f\xarr h SX$. Here $C_f$ is the \emph{cone of $f$}, 
i.e. the factorspace $CX\cup Y/\approx$, where $CX=X\xx [0,1]/X\xx1$ is the cone over $X$ and $\approx$ is the equivalence relation such that all nontrivial equivalences are $(x,0)\approx f(x)$. The map $g$ is the obvious embedding $Y\to C_f$ and $h$ is the surjection 
$C_f\to C_f/Y\simeq SX$. Any sequence isomorphic (in $\cS$) to a cone sequence is called 
a \emph{cofibration sequence}.

\begin{defin}
  Let $F$  be an additive functor from $\cS$ to an abelian category $\cA$. We call $F$ \emph{exact} if for every cofibration sequence  $X\to Y\to Z\to SX$ the induced sequence $FX\to FY\to FZ\to F(SX)$ is exact.
\end{defin}

 For instance, every representable functor $\hos(X,\_\,)$ is exact \cite[Theorem 
1.25]{co}. The same is true for the representable contravariant functors $\hos(\_\,,X)$ (considered as functors $\cS\to\Ab\opp$) \cite[page 6, Property 6]{co}.

\begin{lemma}\label{t16}
  Let $F,G$ be exact functors $\cS\to\cA$ and $\phi:F\to G$ be a morphism of functors 
such that $\phi(S^k):F(S^k)\to G(S^k)$ is an isomorphism for each $k$ and if $\phi(X)$ is 
an isomorphism, so is also $\phi(SX)$. Then $\phi$ is an isomorphism of functors.
\end{lemma}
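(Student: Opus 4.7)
\smallskip

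\noindent\textbf{Proof proposal.} The plan is to reduce everything to the case of spheres by induction on the number of cells of a (minimal) cell decomposition, using the five lemma at each step.

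First I would record that $\phi$ is an isomorphism on every wedge of spheres. This follows from the hypothesis on $S^k$ together with additivity: since $F$ and $G$ are additive exact functors, they send the split cofibration sequence $X\to X\vee Y\to Y$ to split exact sequences, so being an isomorphism on $X$ and on $Y$ implies being an isomorphism on $X\vee Y$. Iterating gives the claim.

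Next, induct on the number $n$ of cells of $X\in\cS$. The base case $n=1$ is a single sphere, which is covered by hypothesis. For the inductive step, let $X'$ be obtained from $X$ by deleting one top-dimensional cell (of dimension $k$, say), so there is an attaching map $f\colon S^{k-1}\to X'$ with cone $C_f\simeq X$. The associated Puppe/cone sequence, extended one step, reads
\[
 S^{k-1}\xarr{f} X'\to X\to S^k\to SX',
\]
and applying the exact functors $F$ and $G$ yields two five-term exact sequences in $\cA$ connected by $\phi$. By the sphere case, $\phi(S^{k-1})$ and $\phi(S^k)$ are isomorphisms; by the inductive hypothesis, $\phi(X')$ is an isomorphism (since $X'$ has $n-1$ cells); and by the second hypothesis of the lemma, $\phi(SX')$ is therefore an isomorphism as well. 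The five lemma applied to these two five-term exact sequences then forces $\phi(X)$ to be an isomorphism, completing the induction.

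I do not expect a serious obstacle: the content is really the observation that the second hypothesis of the lemma — preservation of ``being an isomorphism'' under suspension — is precisely what allows the five-lemma step to close, because the fifth term of the cone sequence unavoidably involves $SX'$ rather than $X'$. The only mild care needed is to choose a cell decomposition of $X$ so that removing a top cell does give a polyhedron $X'$ with strictly fewer cells (standard for finite CW complexes), and to note that $\cA$ being abelian is what legitimates the application of the five lemma to the diagram produced by $\phi$.
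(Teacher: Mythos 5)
Your proof is correct and takes essentially the same route as the paper: the paper inducts on $\dim X$ and peels off the whole top skeleton at once (cofibration $kS^n\to X^n\to X\to kS^{n+1}\to SX^n$), whereas you induct on the number of cells and peel off one top cell at a time, but the mechanism — cone sequence, exactness of $F,G$, the hypothesis on suspensions to handle the fifth term, and the five lemma — is identical. Both variants are sound.
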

\begin{proof}
  We prove that $\phi(X)$ is an isomorphism by induction on $n=\dim X$. The claim is 
obvious for $n=1$, since every $1$-dimensional polyhedron is isomorphic to a wedge of spheres. Suppose that it holds for 
polyhedra of dimension $n$. Let $\dim X=n+1$, $Y=X^n$ be its $n$-th skeleton. Then there 
is a cofibration sequence $kS^n\to Y\to X\to kS^{n+1}\to SY$. It gives rise to the 
commutative diagram with exact rows
\[
  \begin{CD}
   F(kS^n)@>>> FY @>>> FX @>>> F(kS^{n+1}) @>>> F(SY) \\
   @V\phi(kS^n)VV   @V\phi(Y)VV  @V\phi(X)VV @VV\phi(kS^{n+1})V @VV\phi(SY)V \\
    G(kS^n)@>>> GY @>>> GX @>>> G(kS^{n+1}) @>>> G(SY)\,.
\end{CD}  
\]
 By the condition on $\phi$ and the induction supposition, all vertical morphisms except 
$\phi(X)$ are isomorphisms. Then the 5-Lemma asserts that $\phi(X)$ is an isomorphism too.
\end{proof}

 Let now $X$ be a polyhedron, $r_n=r_n(X),\,B=B(X)$. Choose morphisms $S^n\to X$ such that their images form a basis of $\hos_\mQ(S^n,X)$. We get a morphism $B_n=r_nS^n\to X$ which induces an isomorphism $\hos_\mQ(S^n,B_n)\to\hos_\mQ(S^n,X)$. Altogether, we get a morphism $f:B=B(X)\to X$ which induces isomorphisms $\hos_\mQ(S^n,B)\to\hos_\mQ(S^n,X)$ for each $n$. The morphism $f$ gives rise to the morphism of functors $f_*:\hos_\mQ(\_\,,B)\to\hos_\mQ(\_\,,X)$. Since representable functors are exact and the functor $\_\*\mQ$ is exact in $\Ab$, these 
functors are exact too, and $f_*$ satisfies the conditions of Lemma \ref{t16}. Therefore, 
$f_*$ is an isomorphism of functors. Now the Yoneda Lemma implies that $f:B\to X$ is an 
isomorphism in $\cS_\mQ$.\end{proof}

\begin{corol}\label{t17}
 \begin{enumerate}
\item    There are morphisms $\al:X\to B(X)$ and $\be:B(X)\to X$ in $\cS$ such that $\al\be=t1_{B(X)}$ and $\be\al=t1_X$ for some integer $t>0$.
\item   $\dim_\mQ(X,S^n)=r_n(X)$.
\item   If $\La=\Es (X)$, then $\La\*\mQ\simeq\prod_n\Mat(r_n(X),\mQ)$.
\end{enumerate}
\end{corol}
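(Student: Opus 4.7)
All three parts are essentially formal consequences of Proposition~\ref{t15}: $X\simeq B(X)$ in $\cS_\mQ$. I will handle each part in turn, with (1) being the one that actually uses the finite-generation hypothesis.

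For part (1), I start with the inverse pair $\tvi\in\hos_\mQ(X,B(X))$ and $\psi\in\hos_\mQ(B(X),X)$ provided by Proposition~\ref{t15}. Clearing denominators, write $\tvi=\alpha_0/m$ and $\psi=\beta_0/n$ for morphisms $\alpha_0\in\hos(X,B(X))$, $\beta_0\in\hos(B(X),X)$ and positive integers $m,n$. The equations $\tvi\psi=1_{B(X)}$ and $\psi\tvi=1_X$ in $\cS_\mQ$ say that the elements $\alpha_0\beta_0-mn\cdot 1_{B(X)}$ and $\beta_0\alpha_0-mn\cdot 1_X$ lie in the kernels of the respective localization maps $\hos(-,-)\to\hos(-,-)\*\mQ$, i.e.\ they are torsion. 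Here is where I use that $\hos(X,X)$ and $\hos(B(X),B(X))$ are finitely generated abelian groups (the key fact cited just before Proposition~\ref{t14}): their torsion subgroups have a common finite exponent $k$. Setting $\alpha=k\alpha_0$, $\beta=\beta_0$ and $t=kmn$ simultaneously kills both torsion obstructions, yielding $\alpha\beta=t\cdot 1_{B(X)}$ and $\beta\alpha=t\cdot 1_X$ as required. The mild obstacle is precisely this uniformization of $t$ on the two sides, which is why I pass to a common multiple of the torsion exponents.

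For part (2), I apply the (rational) contravariant functor $\hos_\mQ(-,S^n)$ to the isomorphism $X\simeq B(X)$ in $\cS_\mQ$. Since this category is additive,
\[
\hos_\mQ(X,S^n)\;\simeq\;\hos_\mQ(B(X),S^n)\;\simeq\;\bop_m r_m(X)\,\hos_\mQ(S^m,S^n),
\]
and by the semisimplicity formula $\hos_\mQ(S^m,S^n)=\mQ$ if $m=n$ and $0$ otherwise (stated just before Proposition~\ref{t15}), the right-hand side collapses to $\mQ^{r_n(X)}$. Taking dimensions gives the claim.

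For part (3), I combine (1) with the same decomposition argument. The isomorphism $X\simeq B(X)$ in $\cS_\mQ$ induces a ring isomorphism on endomorphism rings,
\[
\La\*\mQ\;=\;\Es(X)\*\mQ\;\simeq\;\End_{\cS_\mQ}(X)\;\simeq\;\End_{\cS_\mQ}(B(X)).
\]
Writing $B(X)=\bve_n r_n(X)\,S^n$ and using again that $\hos_\mQ(S^m,S^n)=0$ for $m\ne n$ while $\hos_\mQ(S^n,S^n)=\mQ$, the endomorphism ring decomposes as the product $\prod_n\Mat(r_n(X),\mQ)$. No real difficulty here once (1) and (2) are in hand; the content is entirely semisimplicity of $\cS_\mQ$ plus the isomorphism $X\simeq B(X)$ in that category.
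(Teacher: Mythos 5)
Your proof is correct and fills in exactly the derivation the paper leaves implicit: Corollary~\ref{t17} is stated without proof, as an immediate consequence of Proposition~\ref{t15}, and your arguments (clearing denominators and multiplying by the exponent of the torsion subgroup for part (1); applying $\hos_\mQ(-,S^n)$ and $\End_{\cS_\mQ}(-)$ to the isomorphism $X\simeq B(X)$ for parts (2) and (3), using the semisimplicity formula for $\hos_\mQ(S^m,S^n)$) are the natural ones.
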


 \begin{defin}\label{t18}
  Let $\cA$ be a preadditive category. A morphism $f:X\to Y$ in $\cA$ is said to be \emph{essentially nilpotent} if for every morphism $g:Y\to X$ the product $fg$ (or, equivalently, $gf$) is nilpotent. We denote the 
set of such morphisms by $\nil(X,Y)$ and by $\nil(X)$ if $X=Y$. The class $\nil\cA=\bup_{X,Y}\nil(X,Y)$ of all essentially nilpotent morphisms is called the \emph{nilradical} of the category $\cA$.\!%
\footnote{\, If $\cA$ only has one object, i.e. is actually a ring, $\nil\cA$ is its 
\emph{upper nil radical} in the sense of \cite{jac}.}
 It is an ideal in $\cA$, so the factor category $\cA^0=\cA/\nil\cA$ is defined. We call it the \emph{semiprime part} of $\cA$. It contains no essentially nilpotent morphisms and a morphism $f:X\to Y$ is an isomorphism \iff so is its image in $\cA^0$. In particular, two objects $X,Y$ are isomorphic in $\cA$ \iff they are isomorphic in $\cA^0$.

In particular, for the category $\hos$, we write
\begin{align*}
 \hos^0(X,Y)&=\hos(X,Y)/\nil(X,Y),\\
 \Es^0(X)&=\Es(X)/\nil(X).
\end{align*}
\end{defin}

\begin{corol}\label{t19}
 $\nil(X,Y)\sbe\trs(X,Y)$ for any objects $X,Y\in\cS$. 
\end{corol}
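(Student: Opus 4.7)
The plan is to pass to the rationalized category $\cS_\mQ$ and use the sphere decomposition of Proposition \ref{t15}. Since $\hos(X,Y)$ is finitely generated, an element is torsion \iff it maps to zero in $\hos_\mQ(X,Y)=\hos(X,Y)\*\mQ$, so it suffices to show that any $f\in\nil(X,Y)$ satisfies $f_\mQ=0$.

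By Proposition \ref{t15} and the formula $\hos_\mQ(S^n,S^m)=\mQ$ for $n=m$ \lb zero otherwise\rb, both $X$ and $Y$ are wedges of spheres in $\cS_\mQ$, and
\[
\hos_\mQ(X,Y)\simeq\bop_n\Mat(r_n(Y),r_n(X),\mQ),\quad \hos_\mQ(Y,X)\simeq\bop_n\Mat(r_n(X),r_n(Y),\mQ),
\]
with composition given by block matrix multiplication. Let $(M_n)$ represent $f_\mQ$ and suppose, for contradiction, that some $M_m\ne 0$. The trace pairing $(N,M)\mapsto\mathrm{tr}(NM)$ on rectangular matrices over $\mQ$ is nondegenerate, so there exists $N_m\in\Mat(r_m(X),r_m(Y),\mQ)$ with $\mathrm{tr}(N_mM_m)\ne 0$, whence $N_mM_m$ is not nilpotent. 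Extending by zeros in the other degrees yields a tuple $(N_n)\in\hos_\mQ(Y,X)$ such that $(N_nM_n)$ is non-nilpotent in $\Es(X)\*\mQ$.

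Since $\hos_\mQ(Y,X)=\hos(Y,X)\*\mQ$, after rescaling $(N_n)$ by a suitable positive integer we may assume it equals $g\*1$ for some $g\in\hos(Y,X)$. Then $g_\mQ f_\mQ$ is a nonzero rational multiple of $(N_nM_n)$, hence non-nilpotent in $\Es(X)\*\mQ$; but nilpotency of $gf\in\Es(X)$ would be preserved upon tensoring with $\mQ$, so $gf$ itself must be non-nilpotent, contradicting $f\in\nil(X,Y)$. Hence $f_\mQ=0$ and $f\in\trs(X,Y)$. The only substantive ingredient is the elementary trace argument in the second paragraph; everything else is a routine translation via Proposition \ref{t15}.
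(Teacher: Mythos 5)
Your proof is correct. It rests on the same underlying fact as the paper's one-line proof, namely that by Corollary~\ref{t17}\,(3) the rationalized endomorphism ring $\Es(X\vee Y)\*\mQ$ is a product of matrix algebras over $\mQ$; but the mechanism you use is different. The paper's argument is pure structure theory: if some $f\in\nil(X,Y)$ were non-torsion, the corresponding element of the nilradical $\nil(X\vee Y)$ would survive rationalization, giving a nonzero nil (hence, by finite-dimensionality, nilpotent) ideal in the semisimple ring $\Es(X\vee Y)\*\mQ$, which is impossible. You instead stay at the level of $\hos_\mQ(X,Y)$ and $\hos_\mQ(Y,X)$, unpack the morphisms into explicit rectangular matrix blocks via Proposition~\ref{t15}, and use nondegeneracy of the trace pairing to manufacture a $g\in\hos(Y,X)$ with $gf$ non-nilpotent. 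This is more elementary --- it avoids invoking the fact that semisimple rings have no nonzero nilpotent ideals, and it avoids the preliminary step of transporting the problem to the nilradical of $\Es(X\vee Y)$ --- at the cost of being considerably longer than the paper's terse appeal to semisimplicity. One stylistic note: rather than saying "after rescaling $(N_n)$ by a suitable positive integer we may assume it equals $g\*1$," it is cleaner to say that some positive integer multiple of $(N_n)$ has the form $g\*1$ (which uses finite generation of $\hos(Y,X)$), since the rescaled element is a different element; the rest of the argument goes through unchanged because the trace is still nonzero.
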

 \begin{proof}
  Otherwise $\Es(X\vee Y)\*\mQ$ contains nilpotent ideals.
\end{proof}

\begin{prop}\label{t10}
  Let $\La$ be a semiprime ring \lb i.e. a ring without nil-ideals\rb\ such that 
$\trs\La$ is a finitely generated group. Then $\La=\trs\La\xx \La^{\nF}$ for some torsion free ideal $\La^{\nF}$ \lb isomorphic to $\La/\trs\La$ as a ring\rb. 
\end{prop}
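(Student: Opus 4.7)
The plan is to produce a central idempotent $e\in\La$ serving as the multiplicative identity of $T:=\trs\La$; the desired ring direct product $\La=T\times\La^{\nF}$ will then follow. First, $T$ is a two-sided ideal of $\La$ (multiplication by any $\la\in\La$ preserves torsion), and since $\trs\La$ is finitely generated torsion, $T$ is in fact finite as a ring.

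The core step is to show that $T$ is semisimple. Since $T$ is finite Artinian, its Jacobson radical $J:=J(T)$ is nilpotent and $T/J$ is a semisimple ring with a two-sided multiplicative identity $\bar e$. Standard idempotent lifting modulo the nilpotent ideal $J$ provides an idempotent $e\in T$ with $e+J=\bar e$. I claim $J$ is $\La$-stable on both sides: for $\la\in\La$ and $k\in J$, computing in $T/J$,
\[
  \overline{\la k}=\bar e\cdot\overline{\la k}=\overline{e\la k}=\overline{(e\la)\,k}=0,
\]
because $e\la\in T$ (as $e\in T$ and $T$ is a two-sided $\La$-ideal), whence $(e\la)\,k\in T\cdot J\sbe J$. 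So $\la k\in J$, establishing $\La J\sbe J$; a symmetric computation yields $J\La\sbe J$. Hence $J$ is a two-sided ideal of $\La$, nilpotent in $T$ and thus in $\La$, and is therefore forced to vanish by the hypothesis. By Artin--Wedderburn, $T$ is now semisimple with multiplicative identity $e$.

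Next I check $e$ is a central idempotent of $\La$: for any $a\in\La$, both $ae$ and $ea$ lie in $T$, so using that $e$ is the identity of $T$ gives $eae=ae$ and $eae=ea$, whence $ae=ea$. The central idempotent splits $\La=e\La\times(1-e)\La$ as a ring direct product. One identifies $e\La=T$ via $e\La\sbe T$ (since $e\in T$) and $T=eT\sbe e\La$ (since $e$ is the unit of $T$), and sets $\La^{\nF}:=(1-e)\La$. This factor is torsion-free, because every torsion element lies in $T=e\La$, which meets $(1-e)\La$ only at $0$.

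The main obstacle is the $\La$-invariance of $J(T)$: a priori a two-sided ideal of $T$ is only a $T$-ideal, so the nilpotent structure does not transfer for free. The trick of absorbing the $\La$-multiplier into $T$ via the lifted identity $\bar e$ is what makes the passage go through; once this is established everything else is a routine verification.
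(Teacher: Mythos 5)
Your proof is correct, and it takes a genuinely different route from the paper's. The paper works entirely inside $\La$: since $\trs\La$ has finite length, it contains a minimal right ideal $I_1$, which by semiprimeness of $\La$ is non-nilpotent and hence of the form $e_1\La$; iterating, $\trs\La=e\La$ for an idempotent $e$, and then direct ideal-theoretic computations show $(1-e)\La e$ and $e\La(1-e)$ vanish (the latter because it is a nilpotent left ideal of the semiprime ring $\La$), giving centrality of $e$. You instead study $T=\trs\La$ as an abstract finite ring: you pass to $T/J(T)$, extract its unit $\bar e$ by Artin--Wedderburn, lift it to an idempotent $e\in T$, and then use the absorption trick $\overline{\la k}=\overline{(e\la)k}$ to promote $J(T)$ from a $T$-ideal to a $\La$-ideal, so semiprimeness kills it in one stroke; centrality of $e$ is then automatic from $e$ being the identity of the two-sided ideal $T$. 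Your argument is arguably more conceptual (it isolates exactly where the hypothesis ``$\trs\La$ finitely generated, i.e.\ finite'' and ``$\La$ semiprime'' are each used), at the modest cost of invoking the structure theory of finite rings without unit (finiteness $\Rightarrow$ Artinian $\Rightarrow$ $J(T)$ nilpotent, and $J(T)=0$ $\Rightarrow$ unit exists), which the paper's hands-on induction avoids having to cite. Both proofs implicitly use that $\La$ itself is unital, as is the case for $\La=\Es(X)$.
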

\begin{proof}
 Obviously, $\trs\La$ is an ideal. Since its additive group is finitely generated, it is 
of finite length as $^\La$-module. Therefore, it contains a minimal right ideal $I_1$. 
Since $I_1$ is not nilpotent, it is generated by an idempotent: $I_1=e_1\La$ where 
$e_1^2=e_1$. Hence $\trs\La=I_1\+I'$ for some right ideal $I'$. The same observation 
shows that $I'=e_2\La\+I''$ and so on. Eventually we get $\trs\La=e\La$ for some 
idempotent $e$. Since $(1-e)\La e\sbe\trs\La$, we get $(1-e)\La e\sbe e\La$, so $(1-e)\La 
e=0$. Then $e\La(1-e)$ is a nilpotent ideal, so $e\La(1-e)=0$ too, and 
$\La=e\La\+(1-e)\La$, where both summands are two-sided ideals, which implies the 
statement.
\end{proof}

\begin{corol}\label{t1a}
  Every object $X\in\cS$ splits uniquely as $X\simeq X^{\nT}\vee X^{\nF}$, so that 
$\Es(X^\nF)/\nil(X^\nF)$ is torsion free, while $\Es(X^\nT)$ is torsion. Especially, if 
$X$ is indecomposable in the category $\cS$, then either $\Es(X)$ is torsion or 
$\Es(X)/\nil(X)$ is torsion free. \emph{(The latter condition means that 
$\trs(X)=\nil(X)$.)}
\end{corol}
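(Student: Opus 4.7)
The plan is to apply Proposition \ref{t10} to the ring $\La^0 = \Es(X)/\nil(X)$ and transport the resulting central idempotent back to $\Es(X)$. First I would verify the hypotheses of Proposition \ref{t10} on $\La^0$: every element of $\nil(X)$ is nilpotent (set $g = 1_X$ in the definition of essentially nilpotent), and $\nil(X)$ contains every nil ideal of $\Es(X)$, so it is the upper nil radical and $\La^0$ is semiprime. Since $\Es(X) = \hos(X,X)$ is finitely generated as an abelian group, so is $\trs\La^0$.

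Proposition \ref{t10} then provides a central idempotent $\bar e \in \La^0$ with $\bar e\La^0 = \trs\La^0$ torsion and $(1-\bar e)\La^0$ torsion free. Because $\nil(X)$ is a nil ideal, the classical lifting construction supplies an idempotent $e \in \Es(X)$ reducing to $\bar e$; full additivity of $\cS$ then splits $e$, yielding $X \simeq X^\nT \vee X^\nF$, with $X^\nT$ the image of $e$ and $X^\nF$ that of $1 - e$.

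Next I would match the endomorphism rings of the two summands with the factors $\bar e\La^0$ and $(1-\bar e)\La^0$. Using the identification $\Es(X^\nT) \cong e\Es(X)e$ (via the splitting maps $i,p$ with $pi = 1_{X^\nT}$), a routine check shows that the essentially nilpotent elements of $\Es(X^\nT)$ correspond to $e\Es(X)e \cap \nil(X) = e\,\nil(X)\,e$. Since $\bar e$ is central in $\La^0$, quotienting yields a ring isomorphism $\Es^0(X^\nT) \cong \bar e\La^0 = \trs\La^0$, which is torsion. Combined with Corollary \ref{t19}, which forces $\nil(X^\nT)$ itself to be torsion, this makes $\Es(X^\nT)$ torsion. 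The symmetric argument shows $\Es^0(X^\nF) \cong (1-\bar e)\La^0$ is torsion free.

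For uniqueness, note that the central idempotent of $\La^0$ cutting out the torsion part is unique, and any two of its lifts in $\Es(X)$ are conjugate by a unit, so the corresponding summands are isomorphic. The indecomposable dichotomy is then immediate: one of $X^\nT$, $X^\nF$ must be zero. The parenthetical identity $\trs(X) = \nil(X)$ in the torsion-free case follows by combining torsion-freeness of $\Es(X)/\nil(X)$ with the inclusion $\nil(X) \sbe \trs(X)$ of Corollary \ref{t19}. The main obstacle will be the careful identification of $\nil(X^\nT)$ with $e\,\nil(X)\,e$ inside $\Es(X)$; all remaining steps reduce to direct applications of results already established in the excerpt.
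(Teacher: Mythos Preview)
Your proposal is correct and follows essentially the same route as the paper: apply Proposition~\ref{t10} to $\Es^0(X)$, lift the resulting central idempotents through the nil ideal $\nil(X)$, split $X$ using full additivity, and identify the corner rings via $\nil(e\La e)=e\,\nil(\La)\,e$. Your write-up is more explicit than the paper's (you justify the semiprimeness of $\La^0$, invoke Corollary~\ref{t19} to pass from ``$\Es^0(X^\nT)$ torsion'' to ``$\Es(X^\nT)$ torsion'', and address uniqueness), but the underlying argument is the same.
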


  We call $X^\nT$ the \emph{torsion part} of $X$ and $X^\nF$ its \emph{torsion reduced} 
part. If $X=X^\nT$, we call $X$ \emph{torsion}; if $X=X^\nF$, we call it \emph{torsion  
reduced}.

\begin{proof}
 Let $\La=\Es(X)$, $\hLa=\Es^0(X)$.  According to Proposition~\ref{t10}, there are 
central idempotents $e_1,e_2$ such that $\hLa=e_1\hLa\+e_2\hLa$, where $e_1\hLa$ is 
torsion and $e_2\hLa$ is torsion free. 
Idempotents modulo $\nil(X)$ can be lifted to idempotents in $\La$, so 
$\La=e^*_1\La\+e^*_2\La$ where $e_i=e_i^*\!\mod\!\nil(X)$. It gives rise to a 
decomposition 
$X=X_1\vee X_2$ of $X$ so that $\Es(X_i)\simeq e^*_i\La e^*_i$. Since $\nil(e\La 
e)=e(\nil\La)e$, we get the statement if we set $X^\nT=X_1$ and $X^\nF=X_2$.
\end{proof}

 \begin{prop}\label{t1b}
\begin{enumerate}
 \item
  For any polyhedra $X,Y$,  $X\sim Y$ \iff $\,X^\nF\sim Y^\nF$ and $\,X^\nT\simeq Y^\nT$.
\item  If $X_1\vee Y_1\simeq X_2\vee Y_2$, where $X_i$ are torsion reduced and $Y_i$ are torsion, then $X_1\simeq X_2$ and $Y_1\simeq Y_2$.
\item  $g(X)=g(X^\nT)$.
\end{enumerate}
\end{prop}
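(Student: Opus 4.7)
My plan is to deduce part (3) from parts (1) and (2), both of which provide the structural information needed. I assume (1) and (2) have been proved first.

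By part (1), membership in $G(X)$ is encoded by the two pieces of data coming from the canonical splitting of Corollary~\ref{t1a}: the iso class of the torsion summand $Y^\nT$ (required to equal that of $X^\nT$) and the genus class of the torsion-reduced summand $Y^\nF$ (required to lie in $G(X^\nF)$). Each $Y \in G(X)$ is thus reconstructed as $Y \simeq X^\nT \vee Z$ with $Z$ torsion reduced and $Z \in G(X^\nF)$.

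Part (2) then supplies the cancellation needed to make this reconstruction essentially unique: if $X^\nT \vee Z \simeq X^\nT \vee Z'$ with $Z,Z'$ torsion reduced, then $Z \simeq Z'$. Combining this with the canonical splitting, I would set up a bijection between iso classes in $G(X)$ and iso classes in $G(X^\nT)$ by tracking the summands through the canonical decomposition: the forward direction extracts the torsion summand (and matches iso classes using part (2) so that distinct classes in $G(X)$ remain distinguished after passage to the torsion side), and the inverse direction uses part (1) to confirm that recombining with a representative torsion-reduced complement lands in $G(X)$ again. The conclusion $g(X) = g(X^\nT)$ follows once this bijection is in hand.

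The main obstacle is the bookkeeping that confirms the correspondence is well defined and mutually inverse on iso classes, i.e.\ that the canonical wedge decomposition of Corollary~\ref{t1a} is respected on both sides throughout. Parts (1) and (2) supply exactly the characterization of genus and cancellation needed, so the plan reduces to careful execution of the correspondence using the canonical torsion/torsion-reduced split.
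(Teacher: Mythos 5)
Your framework for deducing (3) from (1) and (2) matches the paper's (terse) approach, and your first paragraph correctly identifies the data: for $Y\in G(X)$ the torsion summand $Y^\nT$ is \emph{fixed} up to isomorphism (always $\simeq X^\nT$) while the torsion-reduced summand $Y^\nF$ ranges over the iso classes in $G(X^\nF)$. But your second and third paragraphs then contradict this. You claim a bijection between iso classes in $G(X)$ and iso classes in $G(X^\nT)$ whose forward map ``extracts the torsion summand.'' By your own reading of part (1) this map is \emph{constant} --- every $Y\in G(X)$ is sent to the single class of $X^\nT$ --- so it cannot distinguish iso classes in $G(X)$, and part (2) does nothing to repair that: part (2) is a cancellation statement, it does not create distinctions where none exist. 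Moreover $g(X^\nT)=1$ always (a torsion polyhedron in the genus of $X^\nT$ must be $\simeq X^\nT$ by (1) and (2)), so ``$g(X)=g(X^\nT)$'' would force $g(X)=1$ for every $X$, which the paper's Example~\ref{x36}(5) explicitly refutes.

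The correct conclusion from your setup is $g(X)=g(X^{\nF})$: the bijection sends $Y\mapsto Y^\nF$, with inverse $Z\mapsto X^\nT\vee Z$; part (1) shows both maps land in the right genus, part (2) shows both are injective on iso classes. The statement as printed ($g(X)=g(X^\nT)$) is almost certainly a typo for $g(X)=g(X^\nF)$, and the gap in your attempt is that you followed the printed typo into a description of a map that does not work, rather than following through on the (correct) analysis in your own first paragraph.
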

\begin{proof}
 (1) Obviously, every morphism $f:X_p\to Y_p$ maps $X^\nT_p\to Y^\nT_p$ and if $f$ is an isomorphism, so is its restriction on $X^\nT_p$. But $\hos(Z,X^\nT)$ is always torsion, hence, isomorphic to $\bop_p\hos_p(Z,X^\nT)$. Therefore, if $X^\nT_p\simeq Y^\nT_p$ for all $p$, also $X^\nT\simeq Y^\nT$. Thus if $X\sim Y$, then 
$X^\nT\simeq Y^\nT$. Since cancellation holds in $\cS_p$, also $X^\nF\sim Y^\nF$. The converse is evident. 

 (2) Proposition~\ref{t10} implies that $\hos(X_i,Y_j)$ and $\hos(Y_j,X_i)$ belong to 
$\nil(\hos)$. Hence, any morphism from $\hos^0(X_i\vee Y_i,X_j\vee Y_j)$ is given 
by a 
diagonal matrix $\mtr{\al&0\\0&\be}$, where $\al:X_i\to X_j$ and $\be:Y_i\to Y_j$. It is invertible \iff so are $\al$ and $\be$. Since a morphism from $\hos$ is 
invertible \iff its image in $\hos^0$ is invertible, it proves the statement.

 (3) follows immediately from (1) and (2).   
\end{proof}

 \section{$G(X)$ and $G(\La)$}
 \label{s2}

 We are going to establish relations between genera of polyhedra and of modules. First recall a fact from general nonsense. For an object $X$ of a fully additive category $\cA$ we denote by $\add X$ the full subcategory of $\cA$ consisting of all objects that are isomorphic to direct summands of direct multiples $kX$ of the object 
$X$. 

\begin{prop}\label{t21}
  Let $\La=\End X$ be the ring of endomorphism of an object $X$ from a fully additive 
category $\cA$. Then the functor $X^*:Y\mapsto\cA(X,Y)$ establishes an equivalence $\add 
X\simeq\add\La$, where $\La$ is considered as an object of the category of right 
$\La$-modules.\end{prop}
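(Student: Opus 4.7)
The plan is to verify that the functor $X^*$, when restricted to $\add X$, takes values in $\add\La$ and is both fully faithful and essentially surjective. The argument is largely formal, with the fully additive hypothesis on $\cA$ used precisely once, to guarantee essential surjectivity.

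First I would check that $X^*(\add X)\sbe\add\La$. Since $X^*(X)=\cA(X,X)=\La$ as a right $\La$-module, additivity of $X^*$ gives $X^*(kX)\simeq k\La$. For an arbitrary $Y\in\add X$, a splitting $kX\simeq Y\vee Y'$ induces $k\La\simeq X^*(Y)\+ X^*(Y')$, so $X^*(Y)\in\add\La$.

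Next, full faithfulness. The case $Y=Z=X$ is the standard ring isomorphism $\La\simeq\End_\La(\La)$ (an endomorphism of the free cyclic right module is determined by its value on $1$). Additivity in both variables promotes this to the matrix-valued isomorphism $\cA(kX,mX)\simeq\mathrm{Hom}_\La(k\La,m\La)$. For general $Y,Z\in\add X$, fix splittings $kX\simeq Y\vee Y'$ and $mX\simeq Z\vee Z'$; these realize $\cA(Y,Z)$ as a direct summand of $\cA(kX,mX)$ cut out by pre- and post-composition with the corresponding projectors, and $\mathrm{Hom}_\La(X^*(Y),X^*(Z))$ as the parallel summand of $\mathrm{Hom}_\La(k\La,m\La)$, so the isomorphism restricts.

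For essential surjectivity, take $M\in\add\La$. Then $M$ is isomorphic to the image of some idempotent $e\in\End_\La(k\La)$. By the full faithfulness already established, $e$ lifts uniquely to an idempotent $\tilde e\in\cA(kX,kX)$. Here the fully additive hypothesis on $\cA$ enters: $\tilde e$ splits as $\tilde e=\io\pi$ with $\pi\io=1_Y$ for some $Y$, which is then a summand of $kX$ and hence lies in $\add X$. Applying $X^*$ to this splitting and comparing with the splitting of $e$ in the module category gives $X^*(Y)\simeq M$. The main obstacle is really just this last step, since it is where the fully additive hypothesis is needed; all other verifications reduce, after additivity, to the one-object identification $\La\simeq\End_\La(\La)$.
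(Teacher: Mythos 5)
Your proof is correct. The argument the paper gives is a two-sentence Yoneda-style shortcut: it observes that $Y\mapsto\cA(\_\,,Y)$ is an equivalence from $\add X$ onto the representable contravariant functors on $\add X$, and then that any additive functor on $\add X$ is determined (up to isomorphism) by its value at $X$ because every object of $\add X$ is a summand of some $kX$; since the value at $X$ of $\cA(\_\,,Y)$ is precisely $X^*(Y)$, one chains these two identifications to land in $\add\La$. Your proof unpacks exactly what that terse argument is hiding: the ring isomorphism $\La\simeq\End_\La(\La)$ is the $Y=Z=X$ case of full faithfulness, the matrix extension $\cA(kX,mX)\simeq\mathrm{Hom}_\La(k\La,m\La)$ is the ``determined by its value at $X$'' step, and the idempotent-lifting argument for essential surjectivity is where the fully additive hypothesis actually does its work --- a point the paper never spells out. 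The trade-off is standard: the paper's route is shorter and more conceptual at the cost of black-boxing Yoneda and idempotent splitting, while yours is longer but makes visible exactly where each hypothesis enters, which is arguably more informative for a reader who wants to see why full additivity is needed. One small remark on your full-faithfulness step: it is cleanest to say that $X^*$, being a functor, sends the idempotents defining $Y,Z$ inside $kX,mX$ to the idempotents defining $X^*(Y),X^*(Z)$ inside $k\La,m\La$, so the map on Hom-groups is a direct summand of the isomorphism $\cA(kX,mX)\simeq\mathrm{Hom}_\La(k\La,m\La)$ and hence itself an isomorphism; you gesture at this but it is worth stating explicitly.
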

 
 Note that $\add\La$ is actually the category of finitely generated projective right 
$\La$-modules.

\begin{proof}
  The Yoneda Lemma claims that $B\mapsto\cA(\_\,,Y)$ is an equivalence between $\add X$ and 
the category of representable functors $\add X\to\Ab$. But since every object from $\add 
X$ is a direct summand of 
$kX$, any additive functor from $\add X$ is completely defined (up to isomorphism) by its 
value at $X$. It implies the statement.
\end{proof}

 We will use this result in case when $\cA=\cS$, so $\La=\Es(X)$ is the ring of stable 
endomorphisms of $X$. Especially, we will use it to study genera.
 Recall that two $\La$-modules $M,N$ \emph{are in the same genus} \cite{cr1} if 
$M_p\simeq N_p$ for all prime $p$, where $M_p=M\*\mZ_p$. Then we write $N\sim M$. Just as 
above, we denote by $G(M)$ the category of all $\La$-modules $N$ such that $N\sim M$ and 
by $g(M)$ the number of isomorphism classes in $G(M)$. Let $\hLa=\La/\trs\La$. It is an 
order in the semisimple algebra $\prod_i\Mat(r_i(X),\mQ)$ by Corollary~\ref{t17}\,(3). 
Obviously, $g(\La)=g(\hLa)$. Therefore, $g(\La)<\8$ by Jordan--Zassenhaus Theorem 
\cite[Theorem 24.1]{cr1}. 

\begin{prop}\label{t22}
  If $\,Y\sim X$, then $Y\in\add X$.
\end{prop}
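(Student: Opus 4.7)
The plan is to translate the question to modules over $\La=\Es(X)$ via the equivalence $X^*\colon\add X\to\add\La$ of Proposition~\ref{t21}, recognize $\hos(X,Y)$ as a projective $\La$-module, and then lift the resulting object of $\add\La$ back to $Y$ itself.

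First I set $M=\hos(X,Y)$, a finitely generated right $\La$-module. For each prime $p$, a stable isomorphism $Y_p\simeq X_p$ in $\cS_p$ induces a $\La_p$-module isomorphism $M\*\mZ_p=\hos_p(X,Y)\simeq\hos_p(X,X)=\La_p$, so $M$ lies in the same genus as the free $\La$-module $\La$. I would then conclude that $M$ is projective, i.e.\ $M\in\add\La$: this is the classical fact that a lattice in the genus of a projective lattice over a $\mZ$-order is itself projective \cite[\S 31]{cr1}. If the torsion in $\La$ causes trouble, one can first split it off using Corollary~\ref{t1a} and Proposition~\ref{t10} and apply the argument to the torsion-reduced summand.

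Next, Proposition~\ref{t21} supplies some $Y'\in\add X$ with $\hos(X,Y')\simeq M$ as right $\La$-modules. Because $X^*$ is fully faithful on $\add X$, the chosen $\La$-isomorphism and its inverse are realized by stable maps $h\colon Y'\to Y$ and $h'\colon Y\to Y'$; applying $X^*$ to $h'h$ yields $1_{X^*(Y')}$, and faithfulness of $X^*$ on $\add X$ forces $h'h=1_{Y'}$. Thus $h$ is a split monomorphism in $\cS$, producing a decomposition $Y\simeq Y'\vee V$.

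To finish, I apply Proposition~\ref{t21} inside $\cS_p$: the object $Y'_p$ corresponds to $M\*\mZ_p\simeq\La_p$, which corresponds back to $X_p$, so $Y'_p\simeq X_p\simeq Y_p$. Cancellation in $\cS_p$ (noted just before Proposition~\ref{t14}) then forces $V_p=0$ for every prime $p$. Since $\Es(V)$ is a finitely generated abelian group with $\Es(V)\*\mZ_p=0$ at every prime, it must vanish, so $V\simeq 0$ and $Y\simeq Y'\in\add X$. The main obstacle in this plan is the projectivity step; the remaining arguments are essentially formal consequences of Proposition~\ref{t21}, the Yoneda-type identification of stable maps with $\La$-module maps on $\add X$, and local cancellation.
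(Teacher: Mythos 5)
Your route is genuinely different from the paper's. You translate the problem to the module side: show that $M=\hos(X,Y)$ is a finitely generated projective $\La$-module (because it is locally free), pick $Y'\in\add X$ with $X^*(Y')\simeq M$, and then try to identify $Y$ with $Y'$. The paper instead argues directly inside $\cS$: from each local isomorphism $X_p\simeq Y_p$ it produces $f_p\colon Y\to X$ and $g_p\colon X\to Y$ with $g_pf_p$ outside every maximal ideal of $\Es(Y)$ containing $p$; since every maximal ideal of $\Es(Y)$ contains some $p\Es(Y)$, the elements $g_pf_p$ generate the unit ideal, and a finite relation $\sum_i g_if_i=1_Y$ exhibits $Y$ as a retract of $kX$. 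That argument avoids lattice/projectivity machinery altogether.

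There is, however, a real gap in your step that lifts the $\La$-isomorphism to stable maps. From a $\La$-isomorphism $\psi\colon\hos(X,Y')\to\hos(X,Y)$ you can indeed produce $h\colon Y'\to Y$ with $h_*=\psi$: embed $Y'$ as a retract of $kX$, note that $\mathop{\mathrm{Hom}}_\La(\La^k,\hos(X,Y))\simeq\hos(kX,Y)$ by Yoneda, and restrict. But producing $h'\colon Y\to Y'$ realizing $\psi^{-1}$ is exactly the point at issue: $X^*$ is fully faithful only on $\add X$, and $Y\in\add X$ is what you are trying to prove, so you cannot invoke surjectivity of $\hos(Y,Y')\to\mathop{\mathrm{Hom}}_\La(\hos(X,Y),\hos(X,Y'))$. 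Consequently ``$h'h=1_{Y'}$'' is not yet justified and the splitting $Y\simeq Y'\vee V$ does not follow. The argument can be repaired without $h'$: since $Y_p\simeq X_p$ we do have $Y_p\in\add X_p$, so $X^*_p$ is fully faithful on both $Y_p$ and $Y'_p$, hence $(h_p)_*$ being an isomorphism forces $h_p$ to be an isomorphism for every $p$; then for any $Z$ the kernel and cokernel of $h_*\colon\hos(Z,Y')\to\hos(Z,Y)$ are finitely generated abelian groups vanishing at all primes, hence zero, so $h$ is an isomorphism and $Y\simeq Y'\in\add X$. (A minor point on the projectivity step: $\La$ need not be a $\mZ$-order and $M$ need not be a lattice even after passing to the torsion-reduced part, so the cited lattice theorem doesn't apply verbatim; but the underlying fact -- a finitely generated module over a module-finite $\mZ$-algebra that is locally projective at every prime is projective -- holds by an $\mathrm{Ext}^1$/local-vanishing argument and suffices.)
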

\begin{proof} 
 Note that $\hos_p(X,Y)/p\hos_p(X,Y)\simeq\hos(X,Y)/p\hos(X,Y)$ and an endomorphism of 
$X_p$ is invertible \iff it is invertible modulo $p$. Thus if $X_p\simeq Y_p$, there are morphisms $f_p:Y\to X$ and $g_p:X\to Y$ such that $g_pf_p$ does not belong to any maximal ideal $M\sb\Es(Y)$ such that $M\spe p\Es(Y)$. Since any 
maximal ideal of $\Es (Y)$ contains some $p\Es(Y)$, the set $\set{g_pf_p}$ generates the unit ideal. It implies that there are morphisms $f_i:Y\to X$ and $g_i:X\to Y$ ($i=1,2,\dots,k$) such that $\sum_{i=1}^kg_if_i=1$. Let $f:Y\to kX$ be the morphism with the components $\lst fk$ and $g:kX\to Y$ be the morphism with the components $\lst gk$. Then $gf=1_Y$, which means that $Y$ is a direct summand of $kX$.
\end{proof}

\begin{corol}\label{t23}
  $G(X)\simeq G(\La)$, in particular, $g(X)=g(\La)<\8$.
\end{corol}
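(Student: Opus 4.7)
The plan is to combine Propositions~\ref{t21} and~\ref{t22}. Proposition~\ref{t21} gives an equivalence
$F := \hos(X,-) \colon \add X \to \add \La$ sending $X$ to $\La$, and Proposition~\ref{t22} shows
$G(X) \sbe \add X$. I aim to check that $F$ restricts to an equivalence $G(X) \simeq G(\La)$ of full subcategories; since $g(\La) < \8$ has already been observed via the Jordan--Zassenhaus theorem, this will then yield $g(X) = g(\La) < \8$.

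The crux is the compatibility of $F$ with $p$-localization. Applying Proposition~\ref{t21} inside the $\mZ_p$-linear fully additive category $\cS_p$, in which $X_p$ has endomorphism ring $\La_p$, produces an equivalence
$F_p \colon \add X_p \to \add \La_p$ given by $Y_p \mapsto \hos_p(X,Y)$. Since $\hos_p(X,Y) = \hos(X,Y) \* \mZ_p = F(Y) \* \mZ_p$, we get the identification $F_p(Y_p) = F(Y)_p$. Consequently, for $Y \in \add X$ and each prime $p$,
\[
 Y_p \simeq X_p \text{ in } \cS_p \quad \Longleftrightarrow \quad F(Y)_p \simeq \La_p \text{ as } \La_p\text{-modules},
\]
and intersecting over all $p$ delivers the central biconditional $Y \sim X \Longleftrightarrow F(Y) \sim \La$.

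To close the loop I also need the module analog of Proposition~\ref{t22}: if a finitely generated $\La$-module $N$ satisfies $N \sim \La$, then $N \in \add \La$. The proof is the verbatim analog---local isomorphisms $f_p, g_p$ between $N_p$ and $\La_p$ assemble into morphisms whose composites generate the unit ideal of $\End_\La N$, producing a split surjection $k\La \to N$. Hence $G(\La) \sbe \add \La$, and $F$ induces a bijection between the isomorphism classes in $G(X)$ and those in $G(\La)$, establishing the asserted equivalence.

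The one point requiring care is the $p$-localization compatibility above, but since both $F$ and $F_p$ reduce to $\hos(X,-) \* \mZ_p$ it is essentially tautological; I anticipate no genuine obstacle beyond spelling out these identifications.
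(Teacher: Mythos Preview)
Your proposal is correct and follows exactly the route the paper intends: the corollary is stated without proof as an immediate consequence of Propositions~\ref{t21} and~\ref{t22} together with the Jordan--Zassenhaus finiteness already noted before Proposition~\ref{t22}. You have simply spelled out the two points the paper leaves implicit---the compatibility of $X^*$ with $p$-localization (which follows since $X^*(Y)$ is finitely generated projective, so $\Hom$ commutes with $\otimes\,\mZ_p$) and the module-side inclusion $G(\La)\sbe\add\La$---and both are handled correctly.
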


Now we can apply the known facts from the theory of integral representations to genera 
of polyhedra.

\begin{theorem}\label{t24}
 Let $X,Z$ be two polyhedra such that $Z\in\add X$ \lb for instance, $Z\sim kX$ for some 
$k$\rb. If $X\vee Z\simeq Y\vee Z$, then $X\simeq Y$.
\end{theorem}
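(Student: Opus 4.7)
The plan is to transport the problem into the module category of $\La=\Es(X)$ via the Yoneda equivalence of Proposition~\ref{t21} and then invoke a classical cancellation theorem for lattices over $\mZ$-orders. From $X\vee Z\simeq Y\vee Z$, Proposition~\ref{t14} gives $X\sim Y$, and then Proposition~\ref{t22} yields $Y\in\add X$; since also $Z\in\add X$ by hypothesis, Proposition~\ref{t21} applies, and the functor $\hos(X,\_\,)$ converts the given isomorphism into an isomorphism $\La\+P\simeq N\+P$ of right $\La$-modules, where $N=\hos(X,Y)$ and $P=\hos(X,Z)$ are both in $\add\La$ and $N$ lies in the genus of $\La$. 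It suffices to prove $\La\simeq N$.

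To deal with the torsion of $\La$, I would first use Corollary~\ref{t1a} to split $X=X^\nT\vee X^\nF$ and then Proposition~\ref{t1b}(2) to decompose the hypothesis as $X^\nT\vee Z^\nT\simeq Y^\nT\vee Z^\nT$ together with $X^\nF\vee Z^\nF\simeq Y^\nF\vee Z^\nF$, with $Z^\nT\in\add X^\nT$ and $Z^\nF\in\add X^\nF$. For the torsion summand, $\Es(X^\nT)$ is a torsion finitely generated ring, hence finite, so the Krull--Schmidt theorem holds in $\add X^\nT$ and cancellation is automatic. Thus one may assume $X$ is torsion reduced, in which case $\trs\La=\nil\La$ and $\bar\La:=\La/\nil\La$ is a genuine $\mZ$-order in the semisimple algebra $\La\*\mQ\simeq\prod_n\Mat(r_n(X),\mQ)$ described by Corollary~\ref{t17}(3).

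Since idempotents lift through the nil ideal $\nil\La$, the projection $\La\to\bar\La$ induces an equivalence $\add\La\simeq\add\bar\La$, so the cancellation problem descends to an isomorphism $\bar\La\+\bar P\simeq\bar N\+\bar P$ of $\bar\La$-lattices in the same genus. The key observation is that $\bar\La\*\mQ$ has only matrix components over $\mQ$ itself --- in particular, no totally definite quaternion component --- so $\bar\La$ satisfies Eichler's condition, and Jacobinski's cancellation theorem for lattices over such orders (see \cite{cr2}) yields $\bar\La\simeq\bar N$. Lifting back through $\add\La\simeq\add\bar\La$ and then through Proposition~\ref{t21} produces the desired isomorphism $X\simeq Y$. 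The main obstacle is identifying the correct cancellation theorem and verifying Eichler's condition; once this is in place, everything else is bookkeeping through the torsion/torsion-reduced splittings of Section~\ref{s1}.
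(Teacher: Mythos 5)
Your proposal is correct and follows essentially the same route as the paper: transport via the Yoneda equivalence of Proposition~\ref{t21}, reduce to lattices over the torsion-free order $\bar\La=\La/\nil\La$ by lifting idempotents through the nil ideal, verify Eichler's condition from Corollary~\ref{t17}(3), and finish with Jacobinski cancellation. The only small addition you supply is the explicit Krull--Schmidt argument over the finite ring $\Es(X^\nT)$ justifying the reduction to the torsion-reduced case, a step the paper takes for granted when invoking Proposition~\ref{t1b}.
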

\begin{proof}
 Proposition~\ref{t1b} implies that we can suppose $X$ torsion reduced. Then so are also 
$Y$ and $Z$. We consider the functor $X^*$ of Proposition~\ref{t21}. Set 
$\La=\Es(X)=X^*(X),\  M=X^*(Y),\ N=X^*(Z)$. Then $\La\+N\simeq M\+N$ and $N\in\add\La$. 
We have to show that $M\simeq\La$. Set also $R=\nil\La,\ \hLa=\La/R,\ \hM=M/MR,\ 
\hN=N/NR$. Note that $\hLa$ is torsion free as an abelian group. Since $M$ is a 
projective $\La$-module, it is enough to prove that $\hM\simeq\hLa$. By 
Corollary~\ref{t17}\,(3), $\mQ\*\hLa$ is a product of full matrix rings over $\mQ$, so it 
satisfies the \emph{Eichler condition} in the sense of \cite[\S\,51A]{cr2}. Therefore we 
can apply the Jacobinski cancellation theorem \cite[Theorem 51.24]{cr2}: if 
$A\+k\hLa\simeq B\+k\hLa$ for some locally free $\hLa$-modules $A,B$, then $A\simeq B$. 
Since $\hM\+\hN\simeq\hLa\+\hN$, where $\hN\in\add\hLa$, i.e. $\hN\+N'\simeq k\hLa$ for 
some $N'$ and $k$, and $\hM_p\simeq\hLa_p$ for all $p$, it implies that $\hM\simeq\hLa$, 
wherefrom $M\simeq\La$ and $Y\simeq X$.
\end{proof}

Recall that we have set $r_n(X)=\dim_\mQ \hos_\mQ(S^n,X)$. Set 
$$B_0(X)={\bve\hskip-2pt}_{\mbox{\footnotesize$r_n(X)\ne0$}}\hskip1pt S^n.$$ 
In other words, $B_0(X)$ is the wedge of all spheres $S^n$ such that the stable homotopy 
group $\pi^S_n(X)$ is not torsion, or, the same, is infinite.

\begin{theorem}\label{t25}
 $X\sim Y$ \iff $X\vee B_0(X)\simeq Y\+B_0(Y)$.
\end{theorem}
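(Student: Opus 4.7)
The plan is to use the module-theoretic correspondence of Propositions~\ref{t21} and \ref{t22} to reduce the claim to a Jacobinski-type cancellation statement for lattices over the order $\La=\Es(X\vee B_0)$. A preliminary observation is that $B_0$ is a genus invariant: if $X\sim Y$, then since $\hos_\mQ(S^n,X)=\hos_p(S^n,X)\*\mQ$ factors through any $p$-localization, $r_n(X)=r_n(Y)$ for all $n$, and hence $B_0(X)=B_0(Y)$. Conversely, if $X\vee B_0(X)\simeq Y\vee B_0(Y)$, Proposition~\ref{t15} applied to both sides matches the total ranks (including the extra $+1$ from each sphere contributed by $B_0$) and again forces $r_n(X)=r_n(Y)$. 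Write $B_0$ for the common value.

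The direction $(\Leftarrow)$ is then immediate: localize $X\vee B_0\simeq Y\vee B_0$ at each prime $p$ and invoke cancellation in $\cS_p$ (the fact underlying Proposition~\ref{t14}) to strip off the image of $B_0$ and conclude $X_p\simeq Y_p$ for every $p$.

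For $(\Rightarrow)$, assume $X\sim Y$. Proposition~\ref{t1b}(1) disposes of the torsion parts ($X^\nT\simeq Y^\nT$, and $B_0$ is insensitive to them), so we may assume $X,Y$ are torsion reduced. Set $W=X\vee B_0$ and $\La=\Es(W)$. Since $Y\vee B_0\sim W$, Proposition~\ref{t22} puts $Y\vee B_0\in\add W$, and the equivalence $\add W\simeq\add\La$ of Proposition~\ref{t21} identifies $Y\vee B_0$ with a right $\La$-lattice $M$ that lies in the genus of $\La$ and is locally free of rank~$1$. It thus suffices to show $M\simeq\La$.

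By Corollary~\ref{t17}(3),
\[
\La\*\mQ\simeq\prod_n\Mat(r_n(W),\mQ),
\]
with the product over those $n$ for which $r_n(X)\ne 0$; by construction $r_n(W)=r_n(X)+1\ge 2$, so every simple factor of $\La\*\mQ$ is a matrix algebra $\Mat(k,\mQ)$ with $k\ge 2$. In particular, the Eichler condition of \cite[\S\,51A]{cr2} is satisfied, and $M\*\mQ\simeq\La\*\mQ$ has every simple component occurring with multiplicity $r_n(W)\ge 2$. Jacobinski's theorem, in the formulation that genus and isomorphism class coincide for locally free lattices of local rank $\ge 2$ in each simple component, then yields $M\simeq\La$, whence $X\vee B_0\simeq Y\vee B_0$. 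The principal subtlety is precisely this rank-$\ge 2$ condition: $B_0$ is exactly the minimal wedge of spheres that raises every simple-component multiplicity of $\La\*\mQ$ above the Jacobinski threshold, and without it the locally free class group of $\La$ could obstruct the passage from genus to isomorphism class.
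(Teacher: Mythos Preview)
Your reduction to the torsion-reduced case and the $(\Leftarrow)$ direction are fine, but the forward direction has a genuine gap. The ``formulation of Jacobinski'' you invoke --- that under the Eichler condition, genus and isomorphism class coincide for a locally free lattice whose rationalization contains every simple with multiplicity at least~$2$ --- is not a theorem. For a counterexample in the very setting of the paper, take $\La_0=\mZ\times_m\mZ$ with $m>6$ (so that $g(\La_0)=\vi(m)/2>1$ by the computation in Section~\ref{s3}) and set $\La'=\Mat(2,\La_0)$. Then $\La'\*\mQ\simeq\Mat(2,\mQ)\times\Mat(2,\mQ)$, so every simple factor has size~$2$ and the Eichler condition holds; yet by Morita equivalence the genus of $\La'$ as a right $\La'$-module corresponds to the genus of $\La_0^{\,2}$ as a $\La_0$-module, and the latter (via Serre splitting together with Jacobinski cancellation) has exactly $g(\La_0)>1$ isomorphism classes. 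Thus $M\sim\La$ does not force $M\simeq\La$ merely because all $r_n(W)\ge2$; the locally free class group of $\Es(X\vee B_0)$ can perfectly well be nontrivial.

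The paper's argument avoids this by keeping $X$, $Y$, and $B_0$ separate. With $\tX=X\vee B_0$, $\La=\Es(\tX)$ and $M=\tX^*(X)$, $N=\tX^*(Y)$, $L=\tX^*(B_0)$, one applies Roiter's theorem \cite[Theorem~31.28]{cr1} (using that $\mQ\*L$ is a faithful $\mQ\*\La$-module) to obtain $\hM\+\hL\simeq\hN\+L'$ for some $L'\sim\hL$ over $\hLa=\La/\nil\La$. The decisive step is that $\End_{\hLa}(\hL)\simeq\Es^0(B_0)\simeq\mZ^m$, a product of copies of $\mZ$, whence $g(\hL)=1$ and $L'\simeq\hL$. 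This gives $M\+L\simeq N\+L$ and hence $X\vee B_0\simeq Y\vee B_0$, without ever asserting that $g(\La)=1$. So the role of $B_0$ is not to push the simple-component multiplicities above some threshold, but to supply a faithful summand whose \emph{own} genus is a singleton --- precisely because $B_0$ is a wedge of \emph{distinct} spheres.
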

\begin{proof}
  We have already seen (Proposition~\ref{t14}) that $X\vee B_0(X)\simeq Y\+B_0(Y)$ 
implies $X\sim Y$. To show the converse, we can suppose $X$ and $Y$ torsion reduced. Let 
$B_0=B_0(X)$, $\tX=X\vee B_0$. Then $X,Y,B_0\in\add\tX$. Set $\La=\Es(\tX)$, and apply 
the functor $\tX^*$ of Proposition~\ref{t21}. Denote $M=\tX^*(X),\ N=\tX^*(Y),\ 
L=\tX^*(B_0)$; $R=\nil\La,\ \hLa=\La/R,\ \hM=M/MR,\ \hN=N/NR,\ \hL=L/LR$. Since $X\simeq 
B(X)$ in the category $\cS_\mQ$, $\,\mQ\*L$ is a faithful $\mQ\*\La$-module. Hence $\hL$ 
is a faithful $\hLa$-module. Now we can use a Roiter's theorem on genera \cite[Theorem 
31.28]{cr1}. It claims that there is a $\hLa$-module $L'$ such that $L'\sim 
\hL$ and $\hM\+\hL\simeq \hN\+L'$. Note now that $\End_{\hLa}(\hL)\simeq 
\La_0/\nil\La_0$, where $\La_0=\Es(B_0)$, thus $\End_{\hLa}(\hL)\simeq\mZ^m$, where 
$m=\#\setsuch{n\in\mZ}{r_n(X)\ne0}$. Therefore, $g(\hL)=1$ and $L'\simeq\hL$, that is 
$\hM\+\hL\simeq\hN\+\hL$, so $M\+L\simeq N\+L$ and $X\vee B_0\simeq Y\vee B_0$.
 \end{proof}

 Finally, we propose the following conjecture, which is, in some sense, an analogue of 
another Roiter's theorem on genera \cite[Theorem 31.34]{cr1} (boundedness of $g(M)$ for 
all modules $M$ over an order in a semisimple algebra).

\begin{conj}
 For every positive integer $n$ there is an integer $c_n$ such that $g(X)\le c_n$ for 
every polyhedron $X$ of dimension $n$.
\end{conj}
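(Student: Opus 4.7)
The strategy is to translate the conjecture into a statement about the class number of the $\mZ$-order $\Es(X)$ and then to seek a bound that is uniform over dimension-$n$ polyhedra. First, by Proposition~\ref{t1b} one reduces to torsion-reduced $X$. Corollary~\ref{t23} then identifies $g(X)$ with $g(\hLa)$ for the order $\hLa=\Es(X)/\nil(X)$ sitting in the semisimple algebra $A(X)=\prod_k\Mat(r_k(X),\mQ)$. Because $r_k(X)$ is the $k$-th rational Betti number and therefore vanishes for $k>\dim X$, the algebra $A(X)$ is a product of at most $n+1$ matrix algebras over $\mQ$. The Jordan--Zassenhaus theorem shows $g(\hLa)$ is finite for each individual $X$; what must be proved is a bound uniform as $X$ ranges over dimension-$n$ polyhedra.

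A natural intermediate step is to pass to a ``sphere-free'' polyhedron $X_0$ of the same dimension with $g(X)=g(X_0)$, by iteratively stripping off wedge summands isomorphic to spheres. Theorem~\ref{t24} permits cancellation by any $Z\in\add X$, and since every wedge of spheres has trivial genus one would hope that sphere summands contribute nothing to $g(X)$. If this reduction succeeds, the next hope is that for sphere-free $X_0$ the Betti numbers $r_k(X_0)$ are bounded by a function of $n$ alone, which would confine the rational algebra $A(X_0)$ to a finite list depending only on $n$.

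The main obstacle lies beyond this. Even when $A(X_0)$ is restricted to finitely many semisimple $\mQ$-algebras, one still needs to show that only finitely many $\mZ$-orders $\hLa\sbe A(X_0)$ actually occur as $\Es(X_0)/\nil(X_0)$ across all dimension-$n$ sphere-free polyhedra, at least up to an equivalence that preserves $g$. Once that is established, the finite collection of resulting class numbers yields the desired $c_n$. Proving such finiteness appears to require a genuine classification-type result for stable homotopy types in bounded dimension, controlling the integral endomorphism ring by the rational data, and this is where I expect the deepest work of the argument to be concentrated. For small $n$ the conjecture could plausibly be settled by direct enumeration using the explicit classifications referred to in \cite{co}.
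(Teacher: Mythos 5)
This statement is labelled a \emph{conjecture} in the paper, and the authors do not prove it; the most they offer is a reduction of it to another conjecture. So the honest uncertainty in your sketch is appropriate, but it is worth pinpointing where the sketch goes wrong and how the paper actually attacks the problem.

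Your translation via Corollary~\ref{t23} into class numbers of the order $\Es^0(X)$ inside $\prod_k\Mat(r_k(X),\mQ)$ is correct, and Jordan--Zassenhaus gives finiteness of $g(X)$ for each individual $X$. The gap is in the proposed intermediate reduction: stripping off sphere wedge summands does \emph{not} bound the rational Betti numbers of what remains. A wedge of $N$ copies of a non-spherical atom such as $C^{n+1}(\eta)$ (which is $2$-locally indecomposable and not a sphere, so no copy of $S^k$ splits off the wedge) has $r_{n-1}=r_{n+1}=N$, unbounded, while staying in a fixed dimension. Hence there is no confinement of $A(X_0)$ to a finite list of semisimple algebras, and the subsequent hope of enumerating the orders that occur has no basis. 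If the conjecture is true, the bound must come from somewhere that does not require bounding the ranks $r_k(X)$.

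The paper's own route is quite different and avoids this issue entirely. Theorem~\ref{t34} shows that if there are maps $X\xarr\be B\xarr\al X$, with $B$ a wedge of spheres in $k$ distinct dimensions, whose composites are $\equiv m\cdot 1$ modulo torsion, then $g(X)\le(\vi(m)/2)^k$ (and $g(X)=1$ when $m=2$). The mechanism is Lemma~\ref{l35}, which converts such ``spherical $m$-retraction'' data into an inclusion $\Ga'\spe\La\spe m\Ga'$ of $\La=\Es^0(X)$ into a maximal order $\Ga'$; then Proposition~\ref{p31} and Corollary~\ref{c33} bound $g(\La)$ by the index of the image of $\Ga'^\times$ in $(\Ga'/m\Ga')^\times$. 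Crucially, the resulting bound $(\vi(m)/2)^k$ depends on $X$ only through $k$, the number of nonzero $r_k(X)$, which is automatically $\le n+1$. The paper then poses Conjecture~2: that such maps $\al,\be$ exist with $m$ bounded purely in terms of $\dim X$. That is the reduction the paper achieves; it neither bounds Betti numbers nor enumerates orders, and it remains open.
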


\section{Calculations and examples}
\label{s3}

 For calculation of $g(X)$ for concrete polyhedra $X$ the following facts are 
useful. 

\begin{prop}[\cite{da}] \label{p31}
 Let $\La$ be an order in a semisimple $\mQ$-algebra, $\Ga$ be a maximal order containing 
$\La$ and $\La\spe m\Ga$ for some integer $m>1$. Then $g(\La)=g(\Ga)g(\La,\Ga)$, where 
$g(\La,\Ga)$ is the number of double cosets \[
   \Ga^\times \backslash \prod_{p\mid m} \Ga_p^\times / \prod_{p\mid m}\La_p^\times.
\]
\end{prop}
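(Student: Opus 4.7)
The strategy is to count classes in $g(\La)$ by fibering the natural map $\phi\colon g(\La)\to g(\Ga)$ that sends $[M]$ to $[M\otimes_\La\Ga]$; equivalently, after embedding $M$ into $A=\La\otimes\mQ$, to $[M\Ga]$. The map is well-defined and surjective: given $N\in g(\Ga)$, one constructs $M$ with $M\Ga=N$ by choosing local $\La_p$-sublattices at each prime and gluing. The theorem will follow once every fiber of $\phi$ is shown to have cardinality $g(\La,\Ga)$.

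Fix $N\in g(\Ga)$ and compute the fiber over $[N]$. The hypothesis $m\Ga\sb\La$ forces $\La_p=\Ga_p$ for each $p\nmid m$, so any admissible $M$ must satisfy $M_p=N_p$ at such primes. At each $p\mid m$, after fixing an isomorphism $N_p\simeq\Ga_p$, the $\La_p$-sublattices $M_p\sb N_p$ with $M_p\simeq\La_p$ and $M_p\Ga_p=N_p$ are exactly the modules $x_p\La_p$ for $x_p\in\Ga_p^\times$, and $x_p$ is well-defined up to right multiplication by $\La_p^\times$. Thus the raw set of admissible $M\sb N$ is indexed by $\prod_{p\mid m}\Ga_p^\times/\prod_{p\mid m}\La_p^\times$. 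Two such $M,M'$ are $\La$-isomorphic if and only if $M'=uM$ for some $u\in A^\times$; this forces $uN=uM\Ga=M'\Ga=N$, so $u\in\End_\Ga(N)^\times$. Hence the fiber over $[N]$ equals
\[
\End_\Ga(N)^\times\backslash\prod_{p\mid m}\Ga_p^\times/\prod_{p\mid m}\La_p^\times,
\]
and for $N=\Ga$ (where $\End_\Ga(\Ga)=\Ga$) this is exactly $g(\La,\Ga)$.

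The main obstacle is to show that this fiber cardinality is independent of $N$. For general $N\in g(\Ga)$ the ring $\End_\Ga(N)$ is another global maximal order of $A$, not literally $\Ga$, and it acts on the adelic product via local conjugation through the fixed identifications $N_p\simeq\Ga_p$. The key point is that every maximal order is locally isomorphic to $\Ga_p$ at each $p$, and the local identifications can be chosen to come from multiplication by suitable elements $y_p\in A_p^\times$; the resulting local conjugations carry $(\End_\Ga(N))_p^\times$ isomorphically onto $\Ga_p^\times$ and transport the double coset count back to the standard one over $\Ga^\times$. Summing the $g(\Ga)$ equal-sized fibers then produces $g(\La)=g(\Ga)\cdot g(\La,\Ga)$.
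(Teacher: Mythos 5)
The paper offers no proof of Proposition~\ref{p31}; it is quoted from \cite{da}. So the only thing to assess is whether your argument stands on its own, and it does not quite.

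The framework is the natural one and most of it is fine: $\phi\colon G(\La)\to G(\Ga)$, $[M]\mapsto[M\otimes_\La\Ga]$, is well defined and surjective, and your computation of the fiber over $[N]$ as
\[
\End_\Ga(N)^\times \backslash \prod_{p\mid m}\Ga_p^\times \big/ \prod_{p\mid m}\La_p^\times
\]
is correct (using $\La_p=\Ga_p$ for $p\nmid m$, local freeness, and the fact that any $\La$-isomorphism of full lattices in $A$ is left multiplication by a unit of $A$). The genuine gap is in the last paragraph, exactly where you flag the main obstacle. You argue that local identifications $N_p\simeq\Ga_p$ carry $(\End_\Ga(N))_p^\times$ onto $\Ga_p^\times$; this is true but beside the point, since local conjugacy of two maximal orders is automatic. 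What the double coset count depends on is the image of the \emph{global} unit group $\Ga'^\times$ (where $\Ga'=\End_\Ga(N)$) inside the finite group $\prod_{p\mid m}\Ga_p^\times/\prod_{p\mid m}\La_p^\times$, transported by those local conjugations. Since $\Ga'$ and $\Ga$ need not be conjugate by a single global element of $A^\times$ --- they are merely everywhere locally conjugate --- these images are a priori different subgroups, and nothing in your argument compares their indices. Concluding that all fibers have the same cardinality is precisely the substantive content of the proposition; it requires a global ingredient such as strong approximation via reduced norms (the Eichler/Jacobinski/Fr\"ohlich mechanism, or Drozd's id\`ele-theoretic argument in \cite{da}), not a purely local gluing. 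Until that is supplied, the multiplicativity $g(\La)=g(\Ga)\,g(\La,\Ga)$ does not follow.
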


\begin{prop}\label{p32}
  Let $\La$ be an order in a semisimple $\mQ$-algebra, $\Ga$ be a maximal order containing 
$\La$ and $\La\spe m\Ga$ for some integer $m>1$. Then $g(\La,\Ga)$ equals the 
number of cosets
\[
   \mathop\mathrm{Im} \ga  \backslash (\Ga/m\Ga)^\times / (\La/m\Ga)^\times,
\]
 where $\ga$ is the natural map $\Ga^\times\to(\Ga/m\Ga)^\times$.
\end{prop}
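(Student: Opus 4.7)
The plan is to construct an explicit bijection between the two double-coset spaces by reducing everything modulo $m\Ga$. Since $m$ is a unit in $\mZ_p$ for $p\nmid m$ (so that $\La_p=\Ga_p$), the Chinese Remainder Theorem yields ring isomorphisms $\Ga/m\Ga\simeq\prod_{p\mid m}\Ga_p/m\Ga_p$ and $\La/m\Ga\simeq\prod_{p\mid m}\La_p/m\Ga_p$, compatible with the inclusion $\La/m\Ga\hookrightarrow\Ga/m\Ga$. In particular $(\La/m\Ga)^\times\simeq\prod_{p\mid m}(\La_p/m\Ga_p)^\times$, and similarly for $\Ga$. So it suffices to relate these finite unit groups to the adelic unit groups on the left-hand side.

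The key technical step is to verify that, for each $p\mid m$, the ideal $m\Ga_p$ lies in $J(\Ga_p)\cap J(\La_p)$. The inclusion in $J(\Ga_p)$ is standard since $m\Ga_p\sbe p\Ga_p$. For $J(\La_p)$ I would argue directly: given $x\in m\Ga_p$ and $y,z\in\La_p$, write $yxz=m\xi$ with $\xi\in\Ga_p$. In $\Ga_p$ the element $1-m\xi$ is invertible with inverse the geometric series $\sum_{k\ge 0}(m\xi)^k$, which converges because $\Ga_p$ is $p$-adically complete. Every term with $k\ge 1$ lies in $m^k\Ga_p\sbe m\Ga_p\sbe\La_p$, so the inverse lies in $\La_p$. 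Hence $1-yxz\in\La_p^\times$, which shows $x\in J(\La_p)$.

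Once this is in hand, the reduction maps $\Ga_p^\times\twoheadrightarrow(\Ga_p/m\Ga_p)^\times$ and $\La_p^\times\twoheadrightarrow(\La_p/m\Ga_p)^\times$ are both surjective with kernel $1+m\Ga_p$. Taking products over $p\mid m$, the map
\[
 \phi\colon\prod_{p\mid m}\Ga_p^\times\to(\Ga/m\Ga)^\times
\]
is surjective, its kernel $\prod_{p\mid m}(1+m\Ga_p)$ lies inside $\prod_{p\mid m}\La_p^\times$, and $\phi$ carries $\prod_{p\mid m}\La_p^\times$ onto $(\La/m\Ga)^\times$. Consequently $\phi$ induces a bijection of single cosets $\prod_{p\mid m}\Ga_p^\times/\prod_{p\mid m}\La_p^\times\xrightarrow{\sim}(\Ga/m\Ga)^\times/(\La/m\Ga)^\times$.

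Finally, the diagonal embedding $\Ga^\times\hookrightarrow\prod_{p\mid m}\Ga_p^\times$ composed with $\phi$ is precisely the natural map $\ga$, so passing to the left quotient on both sides converts the left $\Ga^\times$-action into a left $\mathrm{Im}\,\ga$-action and yields the required bijection of double cosets. The main obstacle is the verification that $m\Ga_p\sbe J(\La_p)$; once this is secured, the remainder of the argument is a formal consequence of the Chinese Remainder Theorem and the standard surjectivity of reduction on units modulo an ideal inside the Jacobson radical.
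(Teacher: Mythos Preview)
Your approach is essentially the paper's one-line proof (which just records that $\La_p^\times\supseteq 1+m\Ga_p$ and $\Ga_p^\times/(1+m\Ga_p)\simeq(\Ga_p/m\Ga_p)^\times$), with the details filled in.

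There is one slip in your justification. In this paper $\mZ_p$ denotes the \emph{localization} $\{a/b:p\nmid b\}$, not the $p$-adic completion (which is written $\hZ_p$); see Section~\ref{s1}. Hence $\Ga_p=\Ga\otimes\mZ_p$ is \emph{not} $p$-adically complete, and your geometric series $\sum_{k\ge0}(m\xi)^k$ does not converge there. The conclusion $1+m\Ga_p\subseteq\La_p^\times$ is nonetheless correct and can be salvaged without any series: since $m\Ga_p\subseteq p\Ga_p\subseteq J(\Ga_p)$ (the quotient $\Ga_p/p\Ga_p$ being a finite $\mathbb F_p$-algebra), every $1-m\xi$ already has an inverse $u$ in $\Ga_p$; from $u(1-m\xi)=1$ one reads off $u=1+u\,m\xi\in 1+m\Ga_p\subseteq\La_p$, so this inverse lies in $\La_p$. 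Alternatively, run your series argument in the completions $\hat\Ga_p,\hat\La_p$ and then note that an element of the semilocal ring $\La_p$ is a unit if and only if its image in $\hat\La_p$ is. With this correction the remainder of your argument goes through unchanged.
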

\begin{proof}
 It is evident, since $\La_p^\times\spe 1+m\Ga_p$ and $\Ga_p^\times/(1+m\Ga_p)\simeq 
(\Ga_p/m\Ga_p)^\times$.
\end{proof}

\begin{corol}\label{c33}
  Let $\La$ be an order in $\prod_{i=1}^k\Mat(r_i,\mQ)$ such that $\Ga\spe\La\spe m \Ga$ 
for some maximal order $\Ga$ and some integer $m>1$. Then $g(\La)=1$ if $m=2$ and $ 
g(\La)\le(\vi(m)/2)^k$ if $m>2$, where $\vi(m)$ is the Euler function.

\emph{ In particular, $g(\La)=1$ if $m\in\set{2,3,4,6}$.}
\end{corol}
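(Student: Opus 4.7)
The plan is to combine Propositions~\ref{p31} and~\ref{p32} with an explicit analysis of $(\Ga/m\Ga)^\times$. First I would observe that $g(\Ga)=1$: a maximal $\mZ$-order in $\prod_{i=1}^k\Mat(r_i,\mQ)$ splits as a product $\prod_i\Ga_i$ with each $\Ga_i$ maximal in $\Mat(r_i,\mQ)$, and every such $\Ga_i$ is conjugate to $\Mat(r_i,\mZ)$. Hence Proposition~\ref{p31} reduces the task to $g(\La)=g(\La,\Ga)$, which by Proposition~\ref{p32} is the number of double cosets
\[
  \mathrm{Im}(\ga)\backslash(\Ga/m\Ga)^\times/(\La/m\Ga)^\times.
\]
Since passing from left cosets to double cosets can only decrease the cardinality, it is enough to bound $|\mathrm{Im}(\ga)\backslash(\Ga/m\Ga)^\times|$.

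Next I would identify $(\Ga/m\Ga)^\times\simeq\prod_{i=1}^k\gl(r_i,\mZ/m)$ and pin down $\mathrm{Im}(\ga)$, the image of $\Ga^\times=\prod_i\gl(r_i,\mZ)$. The decisive input is surjectivity of the reduction map $\sl(r_i,\mZ)\to\sl(r_i,\mZ/m)$ \lb trivial for $r_i=1$, and for $r_i\ge 2$ a consequence of the fact that $\sl(r_i,\mZ)$ is generated by elementary matrices\rb. Combined with $\det\gl(r_i,\mZ)=\set{\pm 1}$, this gives
\[
  \mathrm{Im}(\ga)=\prod_{i=1}^k\setsuch{g\in\gl(r_i,\mZ/m)}{\det g\equiv\pm 1\pmod m}.
\]

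Finally the determinant map would induce a bijection
\[
  \mathrm{Im}(\ga)\backslash(\Ga/m\Ga)^\times\;\cong\;\prod_{i=1}^k\bigl(\set{\pm 1}\backslash(\mZ/m)^\times\bigr),
\]
of cardinality $1$ when $m=2$ and $(\vi(m)/2)^k$ when $m>2$, yielding the asserted bounds on $g(\La)$. For the parenthetical assertion I would simply observe that $\vi(3)=\vi(4)=\vi(6)=2$, so for each $m\in\set{2,3,4,6}$ the upper bound is $1$. The main \lb rather mild\rb\ obstacle is the surjectivity statement for $\sl(r_i,\mZ)\to\sl(r_i,\mZ/m)$, together with keeping the trivial case $r_i=1$ cleanly separated from $r_i\ge 2$; beyond that the argument is a direct computation with double cosets.
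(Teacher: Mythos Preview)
Your proposal is correct and follows essentially the same route as the paper: reduce via Propositions~\ref{p31} and~\ref{p32} to a double coset count, use that elementary matrices generate $\sl(r_i,\mZ/m)$ so the image of $\Ga^\times$ has index governed by $(\mZ/m)^\times/\{\pm1\}$, and read off the bound. Your treatment is in fact slightly more explicit than the paper's (separating $r_i=1$, writing out $\mathrm{Im}\,\ga$ exactly, and noting that passing to double cosets only helps), but the argument is the same.
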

\begin{proof}
 We can and will identify $\Ga$ with $\prod_{i=1}^k\Mat(r_i,\mZ)$. Then $g(\Ga)=1$,
\begin{align*}
 (\Ga/m\Ga)^\times&\simeq \prod_{i=1}^k\gl(r_i,\mZ/m),\\
\intertext{and}
 \Ga^\times &=\prod_{i=1}^k\gl(r_i,\mZ).
\end{align*}
 The group $\gl(r_i,\mZ)$ contains all elementary matrices, hence its image in 
$\gl(r_i,\mZ/m)$ contains the subgroup generated by elementary matrices, which is 
$\sl(r_i,\mZ/m)$. The quotient group $\gl(r_i,\mZ/m)/\sl(r_i,\mZ/m)$ is isomorphic to 
$(\mZ/m)^\times$, the isomorphism being induced by the determinate. Moreover, 
$\pm1=\det\ga$ for a diagonal matrix from $\Mat(r_i,\mZ)$. Therefore,
\[
 g(\La)\le \prod_{i=1}^k(\mZ_m^\times:\set{\pm1})=\begin{cases}
  1 &\text{ if } m=2,\\
 (\vi(m)/2)^k &\text{ if } m>2.
\end{cases}
\]
\end{proof}

 Applied to polyhedra, it gives the following result.

\begin{theorem}\label{t34}
 Let $X$ be a polyhedron, $B=\bve_{i=1}^kr_iS^{n_i}$ with different $\lst nk$. Suppose 
that there are maps 
$X\xarr\be B\xarr\al X$ such that $\al\be\equiv m1_X\hskip-1ex\mod\hskip-.5ex\trs(X)$ and 
$\be\al\equiv m1_B\hskip-1ex\mod\hskip-.5ex\trs(B)$ for some integer $m>1$. Then 
$g(X)=1$ if $m=2$ and $ g(X)\le(\vi(m)/2)^k$ if $m>2$. \end{theorem}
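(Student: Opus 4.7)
My plan is to reduce Theorem~\ref{t34} to Corollary~\ref{c33}. The congruences $\alpha\beta\equiv m\cdot 1_X\pmod{\trs(X)}$ and $\beta\alpha\equiv m\cdot 1_B\pmod{\trs(B)}$ force $X\simeq B$ in $\cS_\mQ$: torsion vanishes there and $m$ becomes invertible, so $\alpha/m$ and $\beta$ are mutually inverse. Corollary~\ref{t17}(3) then gives $\Lambda\otimes\mQ\simeq A:=\prod_{i=1}^k\Mat(r_i,\mQ)$, where $\Lambda:=\Es(X)$, and Corollary~\ref{t23} combined with the remark preceding Proposition~\ref{t22} yields $g(X)=g(\hat\Lambda)$ where $\hat\Lambda:=\Lambda/\trs(\Lambda)$ is an order in $A$. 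On the $B$-side, since the $n_i$ are distinct and the groups $\hos(S^{n_i},S^{n_j})$ are torsion for $i\ne j$ (stable homotopy of spheres in nonzero degree), the ring $\hat\Gamma:=\Es(B)/\trs(\Es(B))$ equals $\prod_i\Mat(r_i,\mZ)$, a maximal order in $A$.

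The maps $\alpha,\beta$ give a unital ring isomorphism $\rho:\hat\Lambda\otimes\mQ\to\hat\Gamma\otimes\mQ$ via $\rho(f):=\beta f\alpha/m$; multiplicativity and unitality follow from the two congruences by direct calculation. Since $\beta f\alpha\in\Es(B)$, one has $m\rho(\hat\Lambda)\subseteq\hat\Gamma$; and since $\rho(\alpha g\beta)\equiv mg\pmod{\trs}$ for $g\in\Es(B)$, one has $m\hat\Gamma\subseteq\rho(\hat\Lambda)$. Identifying $\hat\Lambda$ with $\rho(\hat\Lambda)\subset A$, these two inclusions say that $\hat\Lambda$ and $\hat\Gamma$ are commensurable of level $m$ inside $A$: each contains $m$ times the other.

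To apply Corollary~\ref{c33} one still needs a single maximal order $\Gamma\subset A$ with $\Gamma\supseteq\hat\Lambda\supseteq m\Gamma$. The natural candidate $\hat\Gamma$ does \emph{not} always work: in general $\hat\Lambda\not\subseteq\hat\Gamma$ but only $\hat\Lambda\subseteq(1/m)\hat\Gamma$. I would therefore produce $\Gamma$ by a local-global argument. At primes $p\nmid m$, $m$ is a $p$-adic unit, so the sandwich $m\hat\Gamma_p\subseteq\hat\Lambda_p\subseteq(1/m)\hat\Gamma_p$ degenerates to $\hat\Lambda_p=\hat\Gamma_p$, a local maximal order; at primes $p\mid m$, one uses the local structure of orders in $A_p=\prod_i\Mat(r_i,\mQ_p)$ together with the explicit inclusion $m\hat\Gamma_p\subseteq\hat\Lambda_p$ to select a local maximal order $\Gamma_p\supseteq\hat\Lambda_p$ satisfying $m\Gamma_p\subseteq\hat\Lambda_p$, and then glue to obtain $\Gamma$ globally. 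Corollary~\ref{c33} then delivers $g(\hat\Lambda)=1$ if $m=2$ and $g(\hat\Lambda)\le(\varphi(m)/2)^k$ if $m>2$, and hence the claimed bounds for $g(X)$. The main obstacle is the last step---selecting and verifying $\Gamma_p$ at primes dividing $m$---which is the technical heart of the argument, but should be routine given the commensurability data established above.
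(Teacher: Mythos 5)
Your outline matches the paper's strategy up to a point, and your step (2) analysis --- the commensurability $m\hat\Gamma\subseteq\rho(\hat\Lambda)$ and $m\rho(\hat\Lambda)\subseteq\hat\Gamma$, and the observation that $\hat\Gamma$ need not contain $\hat\Lambda$ --- is correct and is exactly the issue the paper has to solve. But the last step, where you say a local-global argument ``should be routine'' and would select local maximal orders $\Gamma_p$ with $\Gamma_p\supseteq\hat\Lambda_p\supseteq m\Gamma_p$ at $p\mid m$, is not carried out and is in fact the substantive content of the theorem. You have not shown that such a $\Gamma_p$ exists; it does not follow formally from the two commensurability inclusions, because the maximal order $\hat\Gamma_p$ on the $B$-side and a maximal order containing $\hat\Lambda_p$ are a priori unrelated. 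Asserting this as ``routine'' is precisely where the argument has a genuine gap.

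The paper closes this gap with a separate Lemma (\ref{l35}), proved by a global bimodule construction rather than localization. Starting from the map $\phi:\Gamma\to\Lambda$, $f\mapsto\alpha f\beta$, with $\phi(f)\phi(g)=m\phi(fg)$, one extends $\phi$ $\mQ$-linearly to $\tvi:\sA\to\sA$ and notes that $\psi=\tvi/m$ is an algebra automorphism, so $\Gamma_1:=\psi(\Gamma)$ is again a maximal order and $m\Gamma_1=\phi(\Gamma)\subseteq\Lambda$. One then forms the $\Lambda$-$\Gamma_1$-bimodule $M=m\Lambda\Gamma_1\subset\sA$ and sets $\Gamma'=\setsuch{a\in\sA}{aM\subseteq M}$. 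This $\Gamma'$ contains $\Lambda$ (as $M$ is a left $\Lambda$-module), satisfies $m\Gamma'\subseteq\Gamma'M=M\subseteq\Lambda$, and is maximal because $\Gamma'\simeq\End_{\Gamma_1}(M)$ with $M$ torsion-free over the maximal order $\Gamma_1$ (Curtis--Reiner, Theorem 26.25). It is this single auxiliary maximal order $\Gamma'$, not $\hat\Gamma$, that feeds into Corollary~\ref{c33}. Your proposal stops exactly where this construction is needed, so as written it does not constitute a proof.
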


\begin{proof}
 Denote $\Ga=\Es^0(B),\ 
\La=\Es^0(X)$. Then $g(X)=g(\La)$, $\Ga$ and $\La$ are orders in the 
semisimple $\mQ$-algebra $\sA=\prod_{i=1}^k\Mat(r_i,\mQ)$, and 
$\Ga\simeq\prod_{i=1}^k\Mat(r_i,\mZ)$ is a maximal order with $g(\Ga)=1$. Define a map 
$\phi:\Ga\to\La$ induced by the map $\Es(B)\to\Es(\La),\ f\mapsto \al f\be$. It is 
injective, additive and $\phi(f)\phi(g)=m\phi(fg)$. Thus the assertion of the theorem 
follows from Corollary \ref{c33} and the following lemma.

 \begin{lemma}\label{l35}
 Let $\La$ be an order in a semisimple $\mQ$-algebra $\sA$, $\Ga$ be a maximal order in  
$\sA$ and $\phi:\Ga\to\La$ be an injective homomorphism of additive groups such that 
$m\phi(ab)=\phi(a)\phi(b)$ for every $a,b\in\Ga$ and some integer $m>1$. Then 
 there is a maximal order $\Ga'$ in $\sA$ such that $\Ga'\spe\La\sp m\Ga'$.
\end{lemma}
\noindent
\textit{Proof.}
 We extend $\phi$ to a homomorphism of $\mQ$-modules $\tvi:\sA\to \sA$. Then 
$m\tvi(ab)=\tvi(a)\tvi(b)$ for any $a,b\in \sA$, so $\psi=\frac{\tvi}m$ is an 
automorphism of the algebra $\sA$ and $\psi(\Ga)=\Ga_1$ is a maximal order in $\sA$ too. 
Moreover, $m\Ga_1=\phi(\Ga)\sbe\La$. Consider the $\La$-$\Ga_1$-bimodule 
$M=m\La\Ga_1\sb\sA$ and set $\Ga'=\setsuch{a\in\sA}{aM\sbe M}\simeq\End_{\Ga_1}M$. 
Obviously, $\Ga'\spe\La$ and $m\Ga'\sbe \Ga'M=M\sb\La$. Since $M$ is right torsion free 
$\Ga_1$-module and $\Ga_1$ is maximal, $\Ga'$ is also maximal \cite[Theorem 26.25]{cr1}.
\end{proof}

 Theorem \ref{t34} implies that Conjecture 1 above follows from the following.

\begin{conj}
 For every integer $d$ there is an integer $m>0$ such that for every polyhedron $X$ of 
dimension $d$ there are maps $X\xarr\be B\xarr\al X$, where $B$ is 
a wedge of 
spheres, such that $\al\be\equiv m1_X\hskip-1ex\mod\hskip-.5ex\trs(X)$ and $\be\al\equiv 
m1_B\hskip-1ex\mod\hskip-.5ex\trs(B)$.
\end{conj}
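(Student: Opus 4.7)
The plan is to make the rational equivalence $X\simeq B(X)$ of Proposition~\ref{t15} effective with a denominator bounded uniformly in $d=\dim X$. By Corollary~\ref{t17}(1), for each polyhedron $X$ we already have maps $X\xarr\be B(X)$ and $B(X)\xarr\al X$ with $\al\be=t\cdot 1_X$ and $\be\al=t\cdot 1_{B(X)}$ for some positive integer $t=t(X)$. The content of the conjecture is that, after passing to the quotients by $\trs(X)$ and $\trs(B)$, this multiplier can be chosen to depend only on $d$ and not on $X$.

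A natural first attempt is induction on the skeletal filtration: a $d$-dimensional polyhedron $X$ sits in a cofibration sequence $Y\to X\to kS^d$ with $\dim Y\le d-1$, arising from attachment along a map $\vi\colon kS^{d-1}\to Y$. Composing the splitting $Y\to B(Y)$ supplied by the induction hypothesis with $\vi$ projects $\vi$ onto its rationalization, and the integral obstruction to extending the inductive data across the new $d$-cells lies in a torsion group. These torsion groups are controlled by the exponents of the stable stems $\hos(S^k,S^\ell)$ for $k,\ell\le d$, which by Serre's finiteness theorem for $k>\ell$ (and by degree considerations for $k<\ell$) admit a common exponent $N=N(d)$. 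Multiplying the inductive data by $N$ ought to kill the obstruction and yield a bound of the form $m=N^{O(d)}$.

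The main obstacle, as I expect it, is symmetry. The inductive construction naturally produces $\be\colon X\to B$ and $\al\colon B\to X$ with $\al\be\equiv m\cdot 1_X\pmod{\trs(X)}$, but the reverse composition $\be\al$ on $B$ need not simultaneously satisfy $\be\al\equiv m\cdot 1_B\pmod{\trs(B)}$, since an obstruction-killing lift in one direction does not automatically split the reverse cofibration. Enforcing simultaneous symmetry seems to require either a Roiter-style genus argument in the spirit of the proof of Theorem~\ref{t25}, or a direct uniform bound on the exponents of both the kernel and cokernel of a representative of the rational equivalence $X\*\mQ\simeq B(X)\*\mQ$. Even granting the inductive scheme, converting $N(d)$ into an explicit function $c_n$ for Conjecture~1 via Theorem~\ref{t34} would still require sharper estimates on the exponents of the stable stems than are currently available, which is presumably why the assertion is only stated as a conjecture.
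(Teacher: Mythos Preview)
This statement is labelled \emph{Conjecture} in the paper and is not proved there; the paper simply observes that Theorem~\ref{t34} would derive Conjecture~1 from it, and then moves on to examples. So there is no proof in the paper to compare your proposal against.

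Your write-up is not a proof either, and you say as much: the inductive scheme you outline runs into the symmetry issue you flag (controlling $\be\al$ on $B$ simultaneously with $\al\be$ on $X$), and the uniform bound on the relevant torsion exponents is not established. That is an honest assessment of where the difficulty lies, but it leaves the statement unproved, consistent with its status as an open conjecture in the paper.
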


 In the following examples we use definitions and calculations from \cite[Section 3]{dr}. 
 In particular, we denote by $a$ the $a$-th multiple of a generator of 
the group $\pi_n(S^n)\simeq\mZ$ and by $\eta$ the nonzero element of 
$\pi_n^S(S^{n-1})\simeq\mZ/2$. 
 We also denote by $\mZ\times_m\mZ$ the subring of $\mZ\times\mZ$ consisting of all pairs 
$(\al,\be)$ with $\al\equiv\be\!\pmod m$. Note that $\mZ\times_m\mZ\spe 
m(\mZ\times\mZ)$, so $g(\mZ\times_m\mZ)$ equals the number of double cosets
\[
 \{\pm1\} \times \{\pm1\} \backslash \mZ_m^\times\times\mZ_m^\times / \mZ_m^\times
\]
 under the diagonal embedding of $\mZ_m^\times$ into $\mZ_m^\times\times \mZ_m^\times$. 
It easily gives $g(\mZ\times_m\mZ)=\vi(m)/2$.

 \begin{exam}\label{x36}
 \begin{enumerate}
\item 
 If $M^{n+1}(a)$ is a \emph{Moore atom}, i.e. is defined by the cofibration sequence
\[
  S^n\xarr{a}  S^n \to M^{n+1}(a) \to S^{n+1},
\]
its endomorphism ring is torsion, therefore $g(M^n)=1$.

\item
 The same holds for the Chang atom $C^n(2^r\eta2^s)$ defined by the cofibration 
sequence
\[ \hspace*{3em}
 S^{n-1}\vee S^n\xarr{\mtr{2^r&\eta\\0&2^s}} S^{n-1}\vee S^n \to 
C^{n+1}(2^r\eta2^s)\to S^n\vee S^{n+1},
\] 
 which also has torsion endomorphism ring.

\item
 For Chang atoms $C^{n+1}(2^r\eta)$ and $C^{n+1}(\eta2^s)$ defined, respectively, by the 
cofibration sequences
\begin{align*}
 S^{n-1}&\vee S^n \xarr{\mtr{2^r&\eta}} S^{n-1}\to C^{n+1}(2^r\eta)\to S^n\vee 
S^{n+1}\\\intertext{and}
 &S^n \xarr{\mtr{\eta\\2^s}} S^{n-1}\vee S^n\to C^{n+1}(\eta2^s)\to S^{n+1}
\end{align*}
 the calculations of \cite[Section 3]{dr} show that
\begin{align*}
 \Es^0(C^{n+1}(2^r\eta))&\simeq \Es^0(C^{n+1}(\eta2^s))\simeq\mZ, \\
\intertext{wherefrom}
 g(C^{n+1}(2^r\eta))&= g(C^{n+1}(\eta2^s))=1.
\end{align*}

\item
 Let $C^{n+1}(\eta)$ be the Chang atom defined by the cofibration sequence
\[
 S^n\xarr\eta S^{n-1} \to C^{n+1}(\eta)\to S^{n+1}. 
\]
 One easily verifies that $\Es^0(C^{n+1}(\eta))\simeq\mZ\times_2\mZ$, wherefrom 
$g(C^{n+1}(\eta))=1$. The same holds for the \emph{double Chang atom} $C(\eta^2)$ from 
\cite[Section 5]{dr} defined by the cofibration sequence
\[
 S^n\xarr{\eta^2} S^{n-2} \to C^{n+1}(\eta)\to S^{n+1}, 
\]
 where $\eta^2$ is the nonzero element of $\pi^S_n(S^{n-2})\simeq\mZ/2$.

\item
 Finally, we consider the atom $A^{n+1}(v)$ defined by the cofibration sequence
\[
  S^n\xarr{v\nu} S^{n-3} \to C^{n+1}(\eta)\to A^{n+1}(V)\to S^{n+1},   
\]
 where $0<v\le 12$ and $\nu$ is a generator of the group $\pi_n^S(S_{n-3})\simeq \mZ/24$. 
It follows from \cite[Theorem 2.4]{dr} that $\Es^0(A^{n+1}(v))$ is isomorphic to the 
ring of pairs $(\al,\be)$, where $\al,\be\in\mZ$ and $\al v\nu=\be v\nu$, that is 
$\al\equiv\be\!\pmod m$, where $m=24/d$ and $d=\gcd(v,24)$. It is the ring 
$\mZ\times_m\mZ$. Therefore
\[
 g(A(v))=\begin{cases}
  4 &\text{ if } d=1,\\
  2 &\text{ if } d=2 \text{ or } d=3,\\
  1 &\text{ if } d>3.
\end{cases} 
\]
 Actually, all atoms $A^{n+1}(v)$ with fixed $\gcd(v,24)$ are in the same genus. One 
easily verifies that $B_0(A^{n+1}(v))=S^{n-3}\vee S^{n+1}$, so
\[
 A^{n+1}(v)\vee S^{n-3}\vee S^{n+1}\simeq A^{n+1}(v')\vee S^{n-3}\vee S^{n+1}
\]
 if $\gcd(v,24)=\gcd(v',24)$. Indeed, one can even check that in this case already 
$A^{n+1}(v)\vee S^{n-3}\simeq A^{n+1}(v')\vee S^{n-3}$, as well as $A^{n+1}(v)\vee 
S^{n+1}\simeq A^{n+1}(v)\vee S^{n+1}$.
\end{enumerate}

\end{exam}

\end{document}